\newcommand\vp{{\vec{\mathfrak p}}}
\newcommand\vb{{\vec{b}}}
\newcommand\vs{{\vec{s}}}
\newcommand\vv{{\vec{v}}}
\newcommand\vu{{\vec{u}}}
\newcommand\vw{{\vec{w}}}
\newcommand\vx{{\vec{x}}}
\newcommand\R{{\mathbf{R}}}
\newcommand\lead{{\textup{lead}}}
\newcommand\Z{{\mathbf{Z}}}
\newcommand\Q{{\mathbf{Q}}}
\theoremstyle{plain}
  \newtheorem{theorem}{Theorem}
  \newtheorem{proposition}{Proposition}
  \newtheorem{lemma}{Lemma}
  \newtheorem{corollary}{Corollary}
\theoremstyle{remark}
  \newtheorem{remark}[subsection]{Remark}
  \newtheorem{remarks}[subsection]{Remarks}
  \newtheorem{example}{Example}
\theoremstyle{definition}
\begin{document}

\title{Intersective polynomials and Diophantine approximation, II}

\author{Th\'ai Ho\`ang L\^e}
\address{T. H. L\^e, Department of Mathematics,
The University of Texas at Austin,
1 University Station, C1200
Austin, TX 78712}
\email{leth@math.utexas.edu}

\author{Craig V. Spencer}
\address{C. V. Spencer, Department of Mathematics, Kansas State University,
138 Cardwell Hall, Manhattan, KS 66506}
\email{cvs@math.ksu.edu}

\thanks{The research of the second author is supported in part by NSA Young Investigator Grants \#H98230-10-1-0155 and \#H98230-12-1-0220.}

\begin{abstract}
By applying Schmidt's lattice method, we prove results on simultaneous Diophantine approximation modulo 1 for systems of polynomials in a single prime variable provided that certain local conditions are met.
\end{abstract}

\maketitle

\section{Introduction}
This paper is a continuation of our paper \cite{lespen}, and we refer the reader to the introduction of that paper for a more detailed history of the problem. One common theme in Diophantine approximation is that of \textit{small fractional parts of polynomials}. It has been known since Vinogradov that for every positive integer $k$, there exists an exponent $\theta_k>0$ such that
\[
\min_{1 \leq n \leq N} \| \alpha n^{k} \| \ll_{k} N^{-\theta_k}
\]
for any positive integer $N$ and real number $\alpha$, where $\| \cdot \|$ denotes the distance to the nearest integer. Improved bounds for $\theta_k$ have been proved by many authors, but it is an open conjecture that we can choose $\theta_k$ to be $1-\epsilon$ for any $\epsilon >0$.

Thanks to the work of Danicic, Cook, Schmidt, Baker and others, Vinogradov's result has received many generalizations, which on a qualitative level take the following form:
\begin{theorem} \label{th:s}
There is an exponent $\theta=\theta(k,l)>0$ such that whenever $f_1, \ldots, f_l \in \R[x]$ are polynomials of degree at most $k$ without constant terms, we have
\[
\min_{1 \leq n \leq N} \max_{1 \leq j \leq l} \| f_{j}(n) \| \ll N^{-\theta}
\]
for every positive integer $N$.
\end{theorem}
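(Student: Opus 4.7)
The approach I would take is the classical Schmidt/Weyl one, proceeding by induction on the maximum degree $k$. The base case $k=1$ is just Dirichlet's simultaneous Diophantine approximation theorem: for any $\alpha_1,\dots,\alpha_l \in \R$ and any $N$, there exists $n \in [1,N]$ with $\max_j \|\alpha_j n\| \ll N^{-1/l}$, so one may take $\theta(1,l) = 1/l$. The substance lies in the inductive step.

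For the inductive step I would argue by contradiction. Set $\delta = N^{-\theta}$ for a suitably small $\theta = \theta(k,l) > 0$ and suppose every $n \in [1,N]$ satisfies $\max_j \|f_j(n)\| > \delta$. Approximating the characteristic function of the $\delta$-box at $0 \in \T^l$ by a Fej\'er-type kernel and expanding in a Fourier series indexed by $\vec{h} \in \Z^l$, the (vanishing) count of good $n$ equals a main term $\asymp \delta^l N$ coming from $\vec{h} = 0$ plus exponential-sum errors $S(\vec{h}) = \sum_{n=1}^N e(\vec{h}\cdot\vec{f}(n))$. Consequently there must exist a nonzero $\vec{h}$ with $\|\vec{h}\|_\infty \ll \delta^{-1}$ for which $|S(\vec{h})|$ is abnormally large relative to $N$.

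Now $\vec{h}\cdot\vec{f}(n)$ is a polynomial in $n$ of degree at most $k$ with leading coefficient $\sum_j h_j \alpha_{jk}$, where $\alpha_{jk}$ is the degree-$k$ coefficient of $f_j$. Weyl's inequality converts a large value of $S(\vec{h})$ into a good rational approximation of $\sum_j h_j \alpha_{jk}$. Repeating this for a suitable family of $\vec{h}$'s and then invoking a geometry-of-numbers / Minkowski-style argument (Schmidt's lemma on small integer solutions to systems of linear inequalities), I would extract a single denominator $q$ of controlled size with $\|q\alpha_{jk}\|$ small for every $j$ simultaneously. Partitioning $[1,N]$ into residue classes $n = qm + r$, the degree-$k$ contribution of $f_j(qm+r)$ modulo $1$ becomes, up to negligible terms, $\beta_j (qm)^k$ with $\beta_j = \alpha_{jk} - a_j/q$ so small that on the scale $m \in [1, N/q]$ this term behaves like a polynomial of degree strictly less than $k$. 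The inductive hypothesis applied in the variable $m$ then yields an $m$, and hence an $n \in [1,N]$, meeting the target.

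The step I expect to be the main obstacle is the passage from Weyl-type rational approximations of the individual linear combinations $\sum_j h_j \alpha_{jk}$ to a \emph{simultaneous} rational approximation of every $\alpha_{jk}$ with a common denominator of controlled size: this is precisely where Schmidt's lattice method is essential, and where the quantitative dependence of $\theta(k,l)$ on both $k$ and $l$ is determined. A secondary but nontrivial difficulty is the bookkeeping in the induction: after the reduction the effective polynomials in $m$ can acquire constant terms (which must be absorbed into a slightly enlarged $\delta$-neighbourhood) and the number of independent lower-degree polynomials need not equal $l$, so the inductive hypothesis must be stated with enough flexibility to accommodate both issues while keeping $\theta > 0$ at every step.
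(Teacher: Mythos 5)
There is a genuine gap, and it sits exactly where you predicted: the passage from one abnormally large exponential sum to a \emph{simultaneous} rational approximation of all the leading coefficients $\alpha_{1k},\dots,\alpha_{lk}$ with a common denominator. The Fej\'er-kernel counting argument produces a \emph{single} nonzero frequency $\vec{h}$ with $|S(\vec{h})|$ large, and Weyl's inequality then controls only the one linear combination $\sum_j h_j\alpha_{jk}$. There is no ``suitable family of $\vec{h}$'s'' to repeat the argument over --- the contradiction hypothesis does not supply $l$ independent large frequencies --- and no Minkowski-type lemma can recover approximations to the individual $\alpha_{jk}$ from an approximation to one linear combination of them. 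Consequently the inductive step on the degree $k$, which requires restricting to $n=qm$ with $\|q\alpha_{jk}\|$ small for \emph{every} $j$, cannot be completed as described. (A secondary inaccuracy: a constant term acquired after substitution cannot be ``absorbed into a slightly enlarged $\delta$-neighbourhood,'' since it shifts the target rather than widening it; with $r=0$ this issue disappears, but as written the bookkeeping is not right.)

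The standard repair --- and the structure this paper itself uses for its prime analogue in Section \ref{sec:general} --- is to induct on $l$ rather than on $k$. The single large frequency is interpreted as a primitive point $\vec{p}$ of the dual lattice (Lemma \ref{lem:alternative}); one then approximates \emph{all} coefficients of the one polynomial $t\,\vec{p}\cdot\vec{f}$ by rationals with a common denominator (this is where your progression-splitting, degree-lowering idea genuinely lives: it proves the single-polynomial simultaneous approximation result, cf.\ Proposition \ref{prop:induct} and Theorem \ref{th:simul}), and finally one projects the whole problem onto the hyperplane $\vec{p}^{\perp}$, replacing $\Lambda$ by the sublattice $\Lambda\cap\vec{p}^{\perp}$ of dimension $l-1$ and invoking the induction hypothesis there. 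Note also that the paper does not actually prove Theorem \ref{th:s}; it quotes it as known (Danicic, Cook, Schmidt, Baker), but the proof of Theorem \ref{lattice} exhibits the correct architecture, and your sketch would need to be reorganized along those lines to close the gap.
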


In \cite{lespen}, we demonstrated that more general conditions than those of Theorem \ref{th:s} are  possible. The requirement that $f_1(0) = \cdots = f_l(0) = 0$ was dropped; however, an additional assumption is required to avoid local obstructions, which is easily seen to be necessary.

\begin{theorem} \label{th:main2}
Let $l$ be a positive integer and $h_1, h_2, \ldots, h_{k}$ be polynomials satisfying the following property. 

\begin{itemize}
\item[ ]
 If $f_{i}=\sum_{j=1}^{k} c_{ij} h_j$ for $i=1, \ldots, l$ are any $l$ linear combinations of $h_1, \ldots, h_k$  with coefficients $ 
c_{ij} \in \Z$, and $q$ is any non-zero integer, there exists $n \in \Z$ such that $f_{i}(n) \equiv 0 \pmod{q}$  for every  $i=1, \ldots, l$. 
\end{itemize}

\noindent
Then there is an exponent $\theta>0$ depending only on $l$ and the polynomials $h_i$ such that the following holds. 
Let $A$ be an arbitrary $l \times k$ matrix with real entries. Write 
$A \begin{pmatrix} h_1(n) \\ \vdots \\ h_{k}(n) \end{pmatrix} = \begin{pmatrix} v_1(n) \\ \vdots \\ v_{l}(n) \end{pmatrix}$. Then
\begin{equation*} \label{eq:A}
\min_{1\leq n \leq N} \max_{1\leq i \leq l} \|v_{i}(n)\| \ll_{h_1, \ldots, h_{k}} N^{-\theta},
\end{equation*}
where the bound is uniform in $N$ and $A$.
\end{theorem}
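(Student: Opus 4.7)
The plan is to prove Theorem \ref{th:main2} by contradiction, combining Schmidt's lattice method with the intersective hypothesis at the final step. Assume $\min_{n \leq N} \max_i \|v_i(n)\| > C\, N^{-\theta}$ for a sufficiently large constant $C$ (depending on the $h_j$) and a small $\theta > 0$; I aim to derive a contradiction.

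First I transfer the assumption to exponential sums. A standard smooth Fourier expansion on $(\R/\Z)^l$ shows that the hypothesis forces $|S(\vc)| := |\sum_{n \leq N} e(\vc \cdot \vv(n))|$ to be large, of size $\geq N^{1 - O(\theta)}$, for some nonzero $\vc \in \Z^l$ of height $\leq N^{O(\theta)}$. Since $\vc \cdot \vv(n) = \sum_j (A^T \vc)_j\, h_j(n)$, largeness of $|S(\vc)|$ is a Diophantine statement about the real vector $\vd := A^T \vc$, expressed through the fixed polynomial family $\{h_j\}$.

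Next I apply Schmidt's lattice method to convert this largeness into a simultaneous rational approximation. Iterated Weyl differencing, in the spirit of the proof of Theorem \ref{th:s}, applied to the polynomial $\sum_j d_j h_j$, forces each coordinate of $\vd$ to be close to a rational with a common denominator. Running this over a basis $\vc = e_1,\ldots, e_l$ of $\Z^l$ and synchronizing the denominators by a Minkowski successive minima argument on an auxiliary lattice in $\R^{lk}$, one extracts an integer matrix $C \in \Z^{l \times k}$ and an integer $q \geq 1$, both of size $\leq N^{O(\theta)}$, such that $|qA_{ij} - c_{ij}| \ll q\, N^{-D - \theta}$ for every $i, j$, where $D = \max_j \deg h_j$.

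The intersective hypothesis is now applied to the matrix $C$ and modulus $q$: it furnishes $n_0 \in \Z$ with $u_i(n_0) := \sum_j c_{ij} h_j(n_0) \equiv 0 \pmod{q}$ for each $i$. Since $h_j \in \Z[x]$, the congruence is preserved along $n_0 + q\Z$, so I choose $m$ with $n := n_0 + qm \in [1, N]$ (possible because $q$ is small) and obtain $u_i(n)/q \in \Z$. Decomposing
\[
v_i(n) = \frac{u_i(n)}{q} + \sum_{j} \bigl(A_{ij} - c_{ij}/q\bigr) h_j(n),
\]
the first term contributes nothing to $\|v_i(n)\|$, and the second is bounded by $k\,q^{-1} \max_{ij} |qA_{ij} - c_{ij}| \cdot \max_j |h_j(n)| \ll N^{-\theta}$, contradicting the assumption provided $C$ is large enough.

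The principal obstacle is the middle step: producing a \emph{common} denominator $q$ for all the rational approximations. Standard Weyl differencing yields denominators depending on $\vc$, and synchronizing them into one $q$ requires careful geometry-of-numbers reasoning on an auxiliary lattice in $\R^{lk}$, with all exponents tracked tightly enough that the final error $\|qA-C\|_\infty / q \cdot N^D$ beats $N^{-\theta}$. A secondary subtlety, absent in the case $l = 1$ treated in \cite{lespen}, is that the extracted matrix $C$ may be degenerate (some rows vanish, or its rank drops), forcing us to iterate on a sub-family of the $h_j$; this step is available precisely because the intersective hypothesis is closed under forming arbitrary integer linear combinations, as built into the formulation of the theorem.
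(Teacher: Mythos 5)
This theorem is quoted from \cite{lespen}; the present paper proves only its prime analogue, Theorem \ref{th:mainresult}, but with the same architecture (Sections \ref{sec:weyl}--\ref{sec:general}), so I compare your proposal against that. Your opening move (all $\vx_n=A\,(h_1(n),\ldots,h_k(n))^T$ avoid $\Lambda+B_l$, hence some single frequency has a large exponential sum: Lemma \ref{lem:alternative}) and your closing move (rational approximation of the matrix plus the intersective hypothesis produce an $n$ in the right residue class for which all $\|v_i(n)\|$ are small) both match the paper. The fatal gap is in the middle, and it is exactly the step you flag as ``the principal obstacle'': you cannot ``run this over a basis $\vc=e_1,\ldots,e_l$''. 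The hypothesis $\max_i\|v_i(n)\|>N^{-\theta}$ for all $n$ does \emph{not} imply $\|v_i(n)\|>N^{-\theta}$ for each fixed $i$; consequently the Fourier/lattice transference yields a large sum $S(\vc)$ for one nonzero $\vc$ only, and Weyl differencing then constrains only the single row combination $A^T\vc$. The remaining $l-1$ ``directions'' of $A$ are completely unconstrained, so there is no family of approximations to synchronize, and no amount of geometry of numbers on an auxiliary lattice in $\R^{lk}$ will manufacture the integer matrix $C$ with $qA\approx C$ that your final step requires.

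The paper's (and Schmidt's) remedy is an induction on $l$ rather than a one-shot argument: the single primitive dual vector $\vp$ produced by Lemma \ref{lem:alternative} is used, via Theorem \ref{th:simul}, to show that the vectors $qt\,\lead(g_i)\vb_i$ nearly lie in the hyperplane $\vp^{\perp}$; one then projects onto $\vp^{\perp}$, replaces $\Lambda$ by the rank-$(l-1)$ sublattice $\Lambda\cap\vp^{\perp}$, and applies the induction hypothesis to a new $(l-1)\times k$ matrix. This is why Theorem \ref{lattice} carries the lattice $\Lambda$ and a divisibility modulus as part of the statement, and why the intersective hypothesis (closed under integer linear combinations) is invoked at \emph{every} level of the induction, not once at the end. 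Two secondary points, fixable but worth noting: the approximation quality you can hope for on the coefficient of $h_j$ is $N^{-\deg h_j-\theta}$, not the uniform $N^{-D-\theta}$ you wrote (the final estimate still closes with the degree-dependent version); and recovering coordinatewise approximations to $\vd$ from the coefficients of the single polynomial $\sum_j d_jh_j$ requires first normalizing $h_1,\ldots,h_k$ into a ``nice system'' via Lemma \ref{lem:change}, a reduction your sketch omits.
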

In another direction, in \cite{lespen} we also considered the question of obtaining Diophantine inequalities for polynomials in a single prime variable.
In the case of a single polynomial, we obtained the following result, with a similar necessary local condition.

\begin{theorem} \label{th:main3}
Let $h$ be a polynomial with the property that for every $q \neq 0$, there is an $n \in \Z$ such that $h(n) \equiv 0 \pmod{q}$ and furthermore $q$ is coprime to $n$. Then there is an exponent $\theta>0$, depending only on the degree of $h$, such that
\begin{equation*} \label{eq:P}
\min_{\substack{1 \leq p \leq N \\ p \,\, \mathrm{prime}}} \| \alpha h(p) \| \ll N^{-\theta} 
\end{equation*}
for any positive integer $N$ and real number $\alpha$.
\end{theorem}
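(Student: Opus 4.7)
The plan is to argue by contradiction via a Hardy--Littlewood circle method adapted to sums over primes. Suppose that $\|\alpha h(p)\| \geq \delta := N^{-\theta}$ for every prime $p \leq N$, with $\theta > 0$ (depending only on $\deg h$) to be determined. A Fourier expansion of the indicator of $(-\delta,\delta) \pmod 1$ via a Selberg or Vaaler majorant, truncated at height $K \asymp N^{\theta}$, converts the vanishing count $\#\{p \leq N : \|\alpha h(p)\| < \delta\} = 0$ into a lower bound
\[
\max_{1 \leq m \leq K} |S(m\alpha)| \;\gg\; \frac{\delta\,N}{(\log N)^{O(1)}}, \qquad S(\beta) := \sum_{p \leq N} e\bigl(\beta\, h(p)\bigr),
\]
so that some low-height multiple $m\alpha$ produces a large prime exponential sum.

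To estimate $S(\beta)$, I would write the sum over primes via the von Mangoldt function and apply Vaughan's identity, producing type~I and type~II bilinear sums bounded by Weyl differencing or, at higher degree, by Vinogradov's mean value theorem. The outcome is a dichotomy governed by a parameter $\eta = \eta(\deg h) > 0$: either $|S(\beta)| \ll N^{1-\eta}$ outright, or $\beta$ lies close to a reduced fraction $a/q$ with $q$ of polynomial size and $|\beta - a/q|$ polynomially small. Choosing $\theta$ small compared to $\eta$ rules out the first alternative for $\beta = m\alpha$; absorbing $m$, this collapses to a rational approximation $\alpha \approx a'/Q$ with $Q := mq \leq N^{O(\theta)}$ and a correspondingly strong bound on $|\alpha - a'/Q|$.

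The final step exploits the local hypothesis on $h$: it supplies an integer $n$ with $\gcd(n,Q) = 1$ and $Q \mid h(n)$, and then an effective prime-in-progression result (Siegel--Walfisz for polylogarithmic $Q$, or Linnik's theorem on the least prime in arithmetic progressions for $Q$ up to a small power of $N$) furnishes a prime $p \leq N$ with $p \equiv n \pmod{Q}$. For such $p$ we have $Q \mid h(p)$, so $a'h(p)/Q \in \Z$; hence
\[
\|\alpha h(p)\| \;\leq\; \left|\alpha - \tfrac{a'}{Q}\right| \cdot |h(p)|,
\]
and the strong major-arc approximation forces this quantity to be $o(\delta)$, contradicting the starting assumption. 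The principal obstacle is the joint calibration of three exponents---$\theta$ in the conclusion, the quantitative loss $\eta$ in the Vaughan/Vinogradov estimate, and the Linnik-type exponent in the prime-in-progression bound---all of which interact; and the coprimality clause in the hypothesis on $h$ is precisely the ingredient that makes Dirichlet's theorem applicable at the last step.
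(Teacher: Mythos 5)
Your proposal is correct and takes essentially the same route as the paper's argument (the machinery of Sections \ref{sec:weyl}--\ref{sec:1dim}, and \cite{lespen} for this statement itself): pass from the hypothesis $\|\alpha h(p)\|\geq N^{-\theta}$ to a large prime exponential sum at some small multiple $t\alpha$, apply Vaughan/Harman--Vinogradov minor-arc estimates to force a strong rational approximation $\alpha\approx a'/Q$ with $Q\ll N^{O(\theta)}$, and then combine the second-kind intersectivity hypothesis with Linnik's theorem to produce a prime $p\leq N$ with $Q\mid h(p)$, yielding the contradiction. The only cosmetic differences are your use of a Selberg--Vaaler majorant (costing harmless logarithms) where the paper invokes Baker's Theorem 2.2 (Lemma \ref{lem:montgomery}), and that the modulus $Q$ should also absorb the leading coefficient of $h$, a constant depending only on $h$.
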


In this paper, we prove the following prime analogue of Theorem \ref{th:main2} which generalizes Theorem \ref{th:main3}.  

\begin{theorem} \label{th:mainresult}
Let $l$ be a positive integer and $h_1, h_2, \ldots, h_{k}$ be polynomials satisfying the following property.

\begin{itemize}
\item[ ] If $f_{i}=\sum_{j=1}^{k} c_{ij} h_j$ for $i=1, \ldots, l$ are any $l$ linear combinations of $h_1, \ldots, h_k$ with coefficients $c_{ij} \in \Z$, and $q$ is any non-zero integer,
there exists $n \in \Z$ such that $q$ is coprime to $n$ and $f_{i}(n) \equiv 0 \pmod{q}$ for every $i=1, \ldots, l$. 
\end{itemize}

\noindent
Then there is an exponent $\theta>0$ depending only on $l$ and the polynomials $h_i$ such that the following holds. 
Let $A$ be an arbitrary $l \times k$ matrix with real entries. Write 
$A \begin{pmatrix} h_1(n) \\ \vdots \\ h_{k}(n) \end{pmatrix} = \begin{pmatrix} v_1(n) \\ \vdots \\ v_{l}(n) \end{pmatrix}$. Then
\begin{equation} \label{eq:A2}
\min_{\substack{1 \leq p \leq N \\ p \,\, \mathrm{prime}}} \max_{1\leq i \leq l} \|v_{i}(p)\| \ll_{h_1, \ldots, h_{k}} N^{-\theta},
\end{equation}
where the bound is uniform in $N$ and $A$.
\end{theorem}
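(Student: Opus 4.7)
The argument will combine the Schmidt lattice method underlying Theorem \ref{th:main2} with the exponential-sum-over-primes estimates that powered Theorem \ref{th:main3} in \cite{lespen}. The plan is to assume, toward a contradiction, that $\max_{1\le i\le l}\|v_i(p)\| > N^{-\theta}$ for every prime $p\le N$, and then to derive a contradiction from a Fourier identity whose $\vec{m}=\vec{0}$ main term has size $\gg N^{1-C\theta}$ and whose remaining terms are controlled by prime exponential sums.

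First I would translate the assumption into an exponential sum. Since $\vec{v}(n)=A\vec{h}(n)$, one has $\sum_i m_iv_i(n)=\sum_j c_j(\vec{m})h_j(n)$ with $c_j(\vec{m})=(\vec{m}^{\mathrm T}A)_j$, so each Fourier mode in $\vec{m}$ produces a Weyl sum over primes whose phase is an integer combination of the $h_j$. Choosing a Vaaler-type Fourier majorant $\Psi$ on $\T^l$ concentrated on $[-N^{-\theta},N^{-\theta}]^l$, the contrary assumption forces
\[
T := \sum_{p\le N}(\log p)\,\Psi(\vec{v}(p)) = 0,
\]
while Fourier expansion gives
\[
T = \sum_{\vec{m}\in\Z^l}\widehat{\Psi}(\vec{m})\sum_{p\le N}(\log p)\,e\!\Bigl(\sum_j c_j(\vec{m})h_j(p)\Bigr).
\]
The $\vec{m}=\vec{0}$ contribution is $\gg N\cdot N^{-l\theta}$, so I must rule out cancellation from the non-zero terms.

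The exponential sums $S(\vec{m})=\sum_{p\le N}(\log p)\,e(g_{\vec{m}}(p))$ with $g_{\vec{m}}=\sum_j c_j(\vec{m})h_j$ would be handled by Vaughan's identity and Weyl differencing, exactly as in the proof of Theorem \ref{th:main3}, yielding a bound of the form $|S(\vec{m})|\ll N^{1-\eta}+N\cdot q(\vec{m})^{-\eta}$, where $q(\vec{m})$ is the denominator of a best rational approximation to the leading non-trivial coefficient of $g_{\vec{m}}$ and $\eta>0$ depends only on the $\deg h_j$. The local hypothesis is precisely what controls the major arcs: for any integer combination $g_{\vec{m}}$ of the $h_j$ and any $q\ne 0$, the existence of $n$ coprime to $q$ with $g_{\vec{m}}(n)\equiv 0\pmod q$ ensures that the corresponding singular series over reduced residues is non-degenerate, so the major-arc piece cannot swamp the main term.

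Schmidt's lattice input drives the iteration: the set of $\vec{m}\in\Z^l$ for which $S(\vec{m})$ fails to be negligible lies in a lattice $\Lambda\subset\Z^l$ whose first successive minimum is controlled by the hypothetical failure, and either that minimum is large enough that the minor-arc bound alone closes the argument, or a short vector in $\Lambda$ produces a lower-dimensional sub-system of polynomials $\tilde h_1,\dots,\tilde h_{k'}$ (still integer combinations of the $h_j$) to which one reapplies the argument. The crucial structural point is that the local hypothesis of Theorem \ref{th:mainresult} is closed under integer linear combinations of the $h_j$, so the induction hypothesis is preserved at each reduction step, and the iteration terminates in at most $l$ rounds. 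The main obstacle, as in Theorem \ref{th:main2}, is the quantitative bookkeeping: one must choose $\theta$ small enough that the successive dimension drops remain consistent, and verify at each stage of the recursion that the prime coprimality condition still furnishes a usable lower bound on the major-arc contribution.
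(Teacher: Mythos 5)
Your overall flavor (argue by contradiction, convert the Diophantine failure into exponential sums over primes, use the local hypothesis to control a rational obstruction, induct on $l$ via a lattice) matches the paper, but the central step of your argument has a genuine gap. You propose to expand $\sum_{p\le N}(\log p)\Psi(\vec v(p))$ with a Vaaler majorant and to beat the main term $\gg N^{1-l\theta}$ by bounding every non-zero mode $S(\vec m)$. For the frequencies $\vec m$ where the phase $\sum_j c_j(\vec m)h_j$ is close to a rational polynomial with small denominator, you assert that the local hypothesis makes ``the singular series over reduced residues non-degenerate, so the major-arc piece cannot swamp the main term.'' That is not an argument: $A$ is an arbitrary real matrix, there may be $\asymp N^{l\theta}$ frequencies in the support of $\widehat\Psi$, the major-arc contributions carry no sign, and non-vanishing of a singular series for each individual $\vec m$ gives no lower bound on their signed sum. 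The paper sidesteps this entirely and works \emph{only} on the minor arcs: Lemma \ref{lem:alternative} (Harman/Schmidt) extracts from the failure of $A\vec h(p)\in\Lambda+B_l$ a \emph{single} frequency $t\vp$ ($\vp$ a primitive point of the dual lattice, $|\vp|\ll 1$) at which the prime exponential sum is $\gg\det(\Lambda)^{-1}N$; Theorem \ref{th:simul} then forces all coefficients to be simultaneously close to rationals with one common denominator $q$; and the local hypothesis plus Linnik's theorem produce a prime $p\equiv r_R\pmod R$ with $R\ll N^{o(1)}$ at which the polynomial combination is genuinely small, contradicting the assumption directly. No counting of primes in the target region is ever attempted.

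Two further ingredients you omit are load-bearing. First, the induction step requires exponential sum estimates for primes in arithmetic progressions with moduli as large as a small power of $N$ (Theorem \ref{th:weyl}, with the weights $\lambda_{m,b}$, and Lemma \ref{lem:linnik} via Linnik's theorem for the lower bound on $\sum_n\lambda_{m,b}(n)$); a bound for $\sum_{p\le N}(\log p)e(g(p))$ alone does not suffice, because after one round of the induction the prime is constrained to lie in a progression modulo $R$. Second, you need a \emph{simultaneous} rational approximation to all coefficients $\alpha_1,\dots,\alpha_k$ with a single denominator (Theorem \ref{th:simul}, proved by a downward induction splitting $\{1,\dots,N\}$ into progressions of common difference $q$), not just an approximation to the leading coefficient; without it you cannot locate the residue class $r_R$ in which the intersectivity hypothesis is applied. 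Finally, your description of the iteration places the lattice in frequency space ($\vec m\in\Z^l$); in the paper the lattice $\Lambda$ lives in the target space $\R^l$, and the dimension reduction comes from projecting onto $\vp^{\perp}$ and passing to the sublattice $\Lambda'=\Lambda\cap\vp^{\perp}$, with the new modulus $R$ folded into the induction hypothesis. As written, your reduction step does not produce a well-posed $(l-1)$-dimensional instance of the theorem.
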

%
\begin{example}
 Theorem \ref{th:mainresult} implies that, for some $\theta>0$, we have
 \[
 \min_{\substack{1 \leq p \leq N \\ p \,\, \mathrm{prime}}} \max (\| \alpha (p-1)^2\|, \| \alpha (p-1)^3\|) \ll N^{-\theta} 
 \]
 uniformly in $\alpha, \beta$ and $N$. 
\end{example}
\begin{example}
It is known (see Section \ref{sec:intersective}) that the polynomials $(x^3-19)(x^2+x+1)$ and $x(x^3-19)(x^2+x+1)$ satisfy the condition of Theorem \ref{th:mainresult}.
 Therefore, for some $\theta>0$, we have
  \[
 \min_{\substack{1 \leq p \leq N \\ p \,\, \mathrm{prime}}} \max (\| \alpha (p^3-19)(p^2+p+1)\|, \| \alpha p(p^3-19)(p^2+p+1)\|) \ll N^{-\theta} 
 \]
 uniformly in $\alpha, \beta$ and $N$.
\end{example}

\begin{remarks} $ $

\vspace{-.3cm}
\begin{itemize}   
\item One can see from the proof that $\theta$ actually depends at most on $k, l$ and $\displaystyle \max_{1\leq i \leq k} \deg{h_i}$ but not on the coefficients of $h_i$.
\item The divisibility condition on the polynomials $h_1, h_2, \ldots, h_{k}$ is necessary. This can be seen by taking $A$ to be $\frac{1}{q}$ times an integral $l \times k$ matrix
and $N$ sufficiently large in terms of $q$.
\end{itemize}
\end{remarks}
As pointed out in \cite{lespen}, Theorem \ref{th:mainresult} could have been obtained if one had a simultaneous approximation result 
pertaining to exponential sums over polynomials evaluated at primes, similar in spirit to results of Weyl and Vinogradov. In this paper we obtain such a result 
(Theorem \ref{th:simul}), which can be of independent interest.

One interesting feature of Theorem \ref{th:mainresult} is that we have polynomial savings in (\ref{eq:A2}). Typically, when dealing with exponential sums 
involving the primes less than $N$, one usually expects a savings which is polynomial in $\log N$.
In our situation, however, we work exclusively with minor arc estimates, which 
give us a savings of a power of $N$. Also, we work with primes in arithmetic progressions, but Linnik's theorem enables us to work 
with arithmetic progressions whose moduli are as big as a small power of $N$.

The plan of the paper is as follows. In Section \ref{sec:intersective} we discuss polynomials satisfying the condition in Theorem \ref{th:mainresult}, 
which we coin \textit{intersective polynomials of the second kind}. In particular we show how Berend and Bilu's procedure \cite{bb} of determining intersective polynomials
can be modified slightly to check if a given polynomial in $\Z[x]$ is intersective of the second kind. In Section \ref{sec:lemmas} we recall some preparatory lemmas. The
simultaneous approximation result, our main tool in the proof of Theorem \ref{th:mainresult}, is proved in Sections \ref{sec:weyl} and \ref{sec:simultaneous}.
Finally, in Sections \ref{sec:1dim} and \ref{sec:general} we prove Theorem \ref{th:mainresult} by induction on $l$.

\noindent \textbf{Notation.} We will use Vinogradov's notation $\gg$ and $\ll$ in this paper. The implied constants may depend on $k,l$ and the degrees of $h_1, \ldots, h_k$.

\noindent \textbf{Acknowledgement.} The authors would like to thank Roger Baker, Andrew Granville, Ben Green, and Terence Tao for helpful conversations.

\section{On intersective polynomials of the second kind} \label{sec:intersective}
\subsection{Definitions}
We now recall and define some terms, not all of which are standard in the literature,  regarding the polynomials of interest in this paper. 
A polynomial $h \in \Z[x]$ is called an \textit{intersective polynomial} (of the first kind) if it has a root $\pmod{q}$ for any integer $q \neq 0$. 
If a polynomial has an integer root, then it is certainly intersective, but there are examples of intersective polynomials without rational roots, 
such as $(x^3-19)(x^2+x+1)$ and $(x^2-13)(x^2-17)(x^2-221)$ \cite{bb}. Polynomials $(h_1, \ldots, h_k)$  are called \textit{jointly intersective} (of the first kind) if they have a common root $\pmod{q}$ for any integer $q \neq 0$.\footnote{In the introduction of \cite{lespen}, the definition of jointly intersective polynomials is not correctly stated. We would like to take this opportunity to point out this mistake and rectify it.}
It is easy to see that $(h_1, \ldots, h_k)$ are jointly intersective if and only if
their g.c.d. is intersective. The condition in Theorem \ref{th:main2} is closely related with the notion of jointly intersective polynomials. It says that any $l$ $\Z$-linear combinations of $h_1, \ldots, h_k$ are jointly intersective.
In \cite{lespen}, we showed that if $l \geq 2$, then this condition implies that $h_1, \ldots, h_k$ are  actually jointly intersective themselves, 
but this is not necessarily true if $l=1$.

Following \cite{le}, a polynomial $h \in \Z[x]$ is called an \textit{intersective polynomial of the second kind} if it has a root $\pmod{q}$ that is coprime to $q$ for any integer $q \neq 0$. 
These polynomials are referred to as $\mathcal{P}$-intersective polynomials in \cite{rice} and \textit{intersective polynomials along the primes} in \cite{wierdl}. 
A polynomial having 1 as a root is clearly intersective of the second kind, but again $(x^3-19)(x^2+x+1)$ and $(x^2-13)(x^2-17)(x^2-221)$ serve as examples of 
intersective polynomials of the second kind without rational roots. Polynomials $(h_1, \ldots, h_k)$  are called \textit{jointly intersective of the second kind} 
if they have a common root $\pmod{q}$ that is coprime to $q$ for any integer $q \neq 0$. Again, $(h_1, \ldots, h_k)$ are jointly intersective of the second kind if and only if
their g.c.d. is intersective of the second kind. Also, the condition in Theorem \ref{th:main2} implies that $h_1, \ldots, h_k$ are jointly intersective of the second kind if $l > 1$, but this is not necessarily true if $l=1$.

\subsection{Descriptions}
Similarly to intersective polynomials of the first kind (see \cite[Section 2]{lespen}), we have the following descriptions of intersective polynomials of the second kind.
A polynomial $h \in \Z[x]$ is intersective of the second kind if and only if it has a root 
in $\Z_p^{\times}$ for any prime $p$, where $\Z_p$ is the ring of $p$-adic integers and $\Z_p^{\times}$ is the set of invertible elements of $\Z_p$. 
We can fix, for each $d \in \Z^{+}$, an integer $-d < r_d \leq 0$  such that $(r_d,d)=1$ and $h(n) \equiv 0 \pmod{d}$ whenever $n \equiv r_d \pmod{d}$. Moreover, 
$r_{dq} \equiv r_d \pmod{d}$ for any $d, q \in \Z^{+}$.

The polynomials $h_1, \ldots, h_k \in \Z[x]$ are jointly intersective of the second kind if and only if they are all multiples of an intersective polynomial of the second kind $h \in \Z[x]$.
Thus $h_1, \ldots, h_k \in \Z[x]$ are jointly intersective of the second kind if and only if they have a common root in $\Z_p^{\times}$ for every prime $p$. Also, corresponding to any system of jointly intersective polynomial $(h_1, \ldots, h_k)$, we can associate a sequence $(r_d)_{d \in \Z^{+}}$ such that 
\begin{eqnarray} 
& & -d < r_d \leq 0,  \,(r_d,d)=1, \, r_{dq} \equiv r_d \pmod{d} \textrm{ for any } d, q \in \Z^{+}, \nonumber \\
& & \textrm{ and } \, h_i(n) \equiv 0 \pmod{d} \textrm{ for all }i=1, \ldots, k. \label{eq:rd} \label{eq:d}
\end{eqnarray}

We have the following results for intersective polynomials of the second kind, which are similar to \cite[Propositions 1 and 2]{lespen} and whose proofs will be omitted.

\begin{proposition} \label{prop:intersective}
 $ $

\vspace{-.3cm}
\begin{enumerate}
 \item Let $h_1, \ldots, h_k \in \Z[x]$ be polynomials such that any two linear combinations of $h_1, \ldots, h_k$ with coefficients in $\Z$ are jointly intersective of the second kind. 
Then $h_1, \ldots, h_k$ are jointly intersective of the second kind themselves.

\item For any natural number $k$, there exist polynomials $h_1, \ldots, h_k \in \Z[x]$ that are not jointly intersective of the second kind, but any linear combination of $h_1, \ldots, h_k$ with coefficients in $\Z$ is intersective of the second kind.
\end{enumerate}
\end{proposition}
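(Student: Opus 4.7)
My plan is to argue both parts prime by prime, using the $p$-adic criterion stated above: the family $(h_1,\ldots,h_k)$ is jointly intersective of the second kind iff, for every prime $p$, the finite sets
\[
Z_i(p) := \{\alpha \in \Z_p^{\times} : h_i(\alpha) = 0\}
\]
have nonempty common intersection.

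For part (1), fix a prime $p$ and suppose for contradiction that $\bigcap_{i=1}^{k} Z_i(p) = \emptyset$; I may assume $k\geq 2$, the case $k=1$ being immediate from the hypothesis. Then for every $\alpha \in Z_1(p)$ at least one of $h_2(\alpha),\ldots,h_k(\alpha)$ is nonzero, so the set
\[
L_\alpha := \Bigl\{(c_2,\ldots,c_k) \in \Z^{k-1} : \sum_{i=2}^{k} c_i h_i(\alpha) = 0 \Bigr\}
\]
is the kernel of a nonzero $\Z$-linear map $\Z^{k-1}\to\Z_p$, hence has rank at most $k-2$, and so its $\Q$-span $L_\alpha\otimes\Q$ is a proper subspace of $\Q^{k-1}$. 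Since a finite union of proper $\Q$-subspaces of $\Q^{k-1}$ cannot equal $\Q^{k-1}$, I may choose $(c_2,\ldots,c_k)\in\Z^{k-1}$ so that $g:=\sum_{i\geq 2}c_i h_i$ satisfies $g(\alpha)\neq 0$ for every $\alpha\in Z_1(p)$. Then $h_1$ and $g$---two $\Z$-linear combinations of $h_1,\ldots,h_k$---have no common root in $\Z_p^{\times}$, contradicting the hypothesis that every such pair is jointly intersective of the second kind.

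For part (2), I plan to adapt the construction in \cite[Proposition 2]{lespen} from the first-kind to the second-kind setting. One designates a single prime $p_0$ at which the failure of joint intersectivity is to occur, and constructs $h_1,\ldots,h_k\in\Z[x]$ so that: (a) each $h_i$ has a $p$-adic unit root at every prime $p\neq p_0$; (b) the sets $Z_i(p_0)$ have pairwise nonempty intersection but empty $k$-fold intersection; and (c) every nonzero $\Z$-linear combination $\sum c_i h_i$ still has a root in $\Z_{p_0}^{\times}$. Berend--Bilu type irreducible factors supply the required unit roots at each $p$; multiplication by $(x-1)$, which has $1$ as a root coprime to every modulus, pushes roots into $\Z_p^{\times}$ whenever the first-kind construction happens to produce a non-unit. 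Arranging (b) and (c) simultaneously is an incidence-pattern problem on the $Z_i(p_0)$'s, carried out exactly as in \cite{lespen}.

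The main obstacle lies in part (2), where the delicate step is matching the $p_0$-adic unit roots so that (b) and (c) hold simultaneously while (a) is preserved at all other primes; part (1), by contrast, reduces cleanly to the elementary lattice-avoidance argument sketched above.
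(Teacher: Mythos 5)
First, note that the paper gives no proof of this proposition at all: it states that the proofs ``are similar to \cite[Propositions 1 and 2]{lespen} and will be omitted,'' so there is no line-by-line comparison to make. Judged on its own, your part (1) is a complete and correct argument. After discarding any $h_i$ that is identically zero (harmless to both hypothesis and conclusion), each $Z_i(p)$ is finite, the functional $(c_2,\ldots,c_k)\mapsto\sum_{i\ge 2}c_ih_i(\alpha)$ is nonzero for each $\alpha\in Z_1(p)$, so each $L_\alpha$ spans a proper $\Q$-subspace of $\Q^{k-1}$, and a finite union of proper subspaces cannot exhaust $\Q^{k-1}$; the pair $(h_1,g)$ then has no common root in $\Z_p^{\times}$, contradicting the hypothesis at that prime. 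This is exactly the expected argument and I have no objection to it.

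Part (2), however, is a plan rather than a proof: the entire content of that part is the exhibition of polynomials $h_1,\ldots,h_k$ with the two stated properties, and you do not exhibit any, deferring the ``delicate step'' to \cite{lespen} and conceding it is the main obstacle. The gap is genuine, and your sketch of the incidence pattern at $p_0$ is not merely incomplete but misses a structural constraint that any construction must satisfy. Indeed, a root of $f=\sum c_ih_i$ in $\Z_{p_0}^{\times}$ reduces to a zero of $\sum \bar c_i\bar h_i$ at one of the $p_0-1$ invertible residues; if the value vectors $\bigl(\bar h_1(\alpha),\ldots,\bar h_k(\alpha)\bigr)$ were all nonzero in $(\Z/p_0\Z)^k$, then, since at most $p_0-1$ nonzero points can lie on at most $p_0^{k-1}-1$ of the $(p_0^k-1)/(p_0-1)$ hyperplanes through the origin, some nonzero class of coefficients $(c_1,\ldots,c_k)$ would give a combination with no root in $\Z_{p_0}^{\times}$ at all. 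Hence the reductions $\bar h_1,\ldots,\bar h_k$ \emph{must} share a common root among the invertible residues mod $p_0$, and the failure of joint intersectivity has to be engineered at higher powers of $p_0$ (compare Proposition \ref{prop:bb}(2)); a design based only on the root sets $Z_i(p_0)$ in $\Z_{p_0}^{\times}$, as you describe, cannot work. Two further points: your condition (b) (pairwise nonempty intersection of the $Z_i(p_0)$) is not required by the statement, which concerns single linear combinations only; and the device of multiplying by $x-1$ must be applied with care, since if every $h_i$ acquires that factor then $1$ becomes a common root coprime to every modulus and the system is jointly intersective of the second kind, destroying the conclusion. To close the gap you need explicit polynomials together with a verification that every combination $\sum c_ih_i$ with $(c_1,\ldots,c_k)$ not all divisible by $p_0$ has a root in $\Z_{p_0}^{\times}$ (via Hensel lifting of a nonsingular root modulo a suitable power of $p_0$) while no common root in $\Z_{p_0}^{\times}$ exists.
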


In \cite{bb}, Berend and Bilu obtained a procedure to check if a given polynomial in $\Z[x]$ is intersective. 
Their procedure can be modified very slightly to check if a given polynomial is intersective of the second kind, which we will now describe. Let us first recall their procedure.
Let $P(x)=\sum_{i=1}^n a_i x^{i} \in \Z[x]$. We assume that $\gcd(a_0, \ldots, a_n)=1$. 
Suppose $P(x)=h_1(x)\cdots h_{\nu}(x)$ factors as a product of irreducible polynomials $h_i$ in $\Z[x]$. Let  $\rho_i$ denote  the resultant of $h_i$ and $h_i'$. 
Set $\delta=\rho_1\cdots \rho_\nu$. 

Let $L$ be the splitting field of $P$ over $\Q$ and $G=\text{Gal}(L/\Q)$. For each $i$, let $\theta_i$ be a fixed root of $h_i$, $K_i=\Q(\theta_i)$, and $H_i=\text{Gal}(L/K_i)$ be a subgroup of $G$.
Let $U=\cup_{i=1}^{\nu} H_i \subset G$.

If $p$ is a prime not dividing $\delta$, then $p$ is unramified in $L$. 
Let $\mathfrak{p}$ be any prime ideal of $L$ lying over $p$. 
Recall that the \textit{Frobenius symbol} $\left( \dfrac{L/\Q}{\mathfrak{p}} \right)$ is an element of $G$ satisfying 
\[
 \left( \dfrac{L/\Q}{\mathfrak{p}} \right) (\alpha) \equiv \alpha^{p} \pmod{\mathfrak{p}}
\]
for any $\alpha$ in the ring of integers of $L$. Finally, the \textit{Artin symbol} $\left[ \dfrac{L/\Q}{p} \right]$ 
is the conjugacy class of $\left( \dfrac{L/\Q}{\mathfrak{p}} \right)$ in $G$, which depends on $p$ but not on $\mathfrak{p}$.

Berend and Bilu's main observations are the following.

\begin{proposition}\label{prop:bb}

 $ $
 
\vspace{-.3cm}  
\begin{enumerate}
 \item \label{prop:p1} If $p$ is a prime not dividing $\delta$, then $P(x)$ has a root in $\Z_p$ if and only if  $\left[ \dfrac{L/\Q}{p} \right] \cap U \neq \emptyset$.  
 \item \label{prop:p2} For any prime $p$, $P(x)$ has a root in $\Z_p$ if and only if there is an integer 
$\lambda$ such that 
\begin{equation}\label{eq:p-adic}
|P(\lambda)|_p < |\delta|_{p}^2. 
\end{equation}
 In fact, this $\lambda$ refines to a root of $P$ in $\Z_p$.
\end{enumerate}

\end{proposition}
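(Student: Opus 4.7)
The plan is to handle the two parts separately, using standard algebraic number theory (Frobenius/Artin formalism) for (1) and a quantitative Hensel's lemma combined with a resultant identity for (2).

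For part (1), I would first reduce to a single irreducible factor: $P$ has a root in $\Z_p$ if and only if some $h_i$ does. Because $p\nmid\delta$ implies $p\nmid\rho_i$, the reduction $h_i\bmod p$ is separable, so by Hensel's lemma having a root in $\Z_p$ is equivalent to having a root in $\mathbf{F}_p$. Next I would identify the roots of $h_i$ in $L$ with the coset space $G/H_i$ via $gH_i\leftrightarrow g\theta_i$. Choosing a prime $\mathfrak{p}$ of $L$ over $p$ and reducing mod $\mathfrak{p}$, the Frobenius $\sigma=\left(\tfrac{L/\Q}{\mathfrak{p}}\right)$ acts on the residue field as the $p$-th power map, so a root $g\theta_i$ reduces to an element of $\mathbf{F}_p$ exactly when $\sigma(g\theta_i)=g\theta_i$, i.e.\ $g^{-1}\sigma g\in H_i$. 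Therefore $h_i$ has a root in $\mathbf{F}_p$ iff $\sigma$ is conjugate to some element of $H_i$, iff $\left[\tfrac{L/\Q}{p}\right]\cap H_i\ne\emptyset$. Taking the union over $i$ gives $\left[\tfrac{L/\Q}{p}\right]\cap U\ne\emptyset$, proving (1).

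For part (2), the ``only if'' direction is straightforward: if $\alpha\in\Z_p$ is a root of $P$, pick an integer $\lambda$ with $|\lambda-\alpha|_p$ arbitrarily small; Taylor expansion yields $|P(\lambda)|_p\le C|\lambda-\alpha|_p$ for a constant depending only on $P$, which can be made smaller than $|\delta|_p^2$. For the ``if'' direction, suppose $|P(\lambda)|_p<|\delta|_p^2=\prod_i|\rho_i|_p^2$. Since $|P(\lambda)|_p=\prod_i|h_i(\lambda)|_p$, the pigeonhole principle provides some index $i$ with $|h_i(\lambda)|_p<|\rho_i|_p^2$. Now the resultant identity supplies polynomials $A,B\in\Z[x]$ with $A(x)h_i(x)+B(x)h_i'(x)=\rho_i$; evaluating at $\lambda\in\Z$ yields
\[
|\rho_i|_p\le \max\bigl(|h_i(\lambda)|_p,|h_i'(\lambda)|_p\bigr).
\]
Because $|h_i(\lambda)|_p<|\rho_i|_p^2\le|\rho_i|_p$, we must have $|h_i'(\lambda)|_p\ge|\rho_i|_p$, and consequently $|h_i(\lambda)|_p<|\rho_i|_p^2\le|h_i'(\lambda)|_p^2$. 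The standard form of Hensel's lemma then refines $\lambda$ to a root of $h_i$ in $\Z_p$, which is also a root of $P$.

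I expect the main obstacle to be the careful bookkeeping in part (1): one has to verify precisely that, after reduction modulo $\mathfrak{p}$, the Frobenius $\left(\tfrac{L/\Q}{\mathfrak{p}}\right)$ acts on the roots of $h_i$ as left multiplication does on $G/H_i$, and that a coset being fixed by $\sigma$ corresponds to a root landing in $\mathbf{F}_p$. The pigeonhole-plus-resultant argument in (2) is clean but requires the conceptual observation that the $|\delta|_p^2$ threshold splits multiplicatively across the factors $h_i$ in exactly the way Hensel demands.
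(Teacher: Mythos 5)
Your proof is correct, and it is essentially the argument of Berend and Bilu that the paper simply cites for this proposition (the paper gives no proof of its own): part (1) is the Dedekind--Frobenius correspondence between roots of $h_i$ in $\mathbf{F}_p$ and fixed cosets of $G/H_i$ under the Frobenius, and part (2) is the pigeonhole-plus-resultant reduction to the strong form of Hensel's lemma. The only bookkeeping point worth making explicit in part (1) is that $p\nmid\delta$ also forces $p$ not to divide the leading coefficient of any $h_i$ (since $\mathrm{Res}(h_i,h_i')$ is the leading coefficient times the discriminant), so the reduction mod $p$ keeps full degree and the root-lifting argument applies as you state it.
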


By the Chebotarev density theorem, for any conjugacy class $C \subset G$, there is a subset of primes $p$ of positive density (namely, equal to $\frac{|C|}{|G|}$) for which  $\left[ \dfrac{L/\Q}{p} \right]=C$.  
Thus in order to check if $P$ is intersective, it suffices to check the following:
\begin{itemize}
\item (\ref{eq:p-adic}) holds for any prime $p$ dividing $\delta$. In other words, for any $p|\delta$, $P(x)$ has a root $\pmod{p^{2\alpha+1}}$, where $p^{\alpha}|| \delta$. 
\item For any conjugacy class $C \subset G, C \cap U \neq \emptyset$. 
\end{itemize}
By the effective version of the Chebotarev density theorem, any conjugacy class $C$ can be realized as $\left[ \dfrac{L/\Q}{p} \right]$ for some $p \leq 2d_{L}^{A}$, 
where $d_L$ is the absolute discriminant of the field $L$ and $A$ is an effectively computable constant. Therefore, the second condition above can again be checked 
using a finite number of congruences.

If $P$ is intersective of the second kind, then $P$ is \textit{a fortiori} intersective, and we can use the above procedure to check if $P$ is intersective. 
Our observation is the following
\begin{center}(3) \textit{
If $P$ has a root in $\Z_p$, then this root is automatically in $\Z_p^{\times}$ unless $p | a_0$.}
\end{center}
Thus all we have to do is to check that for any prime $p$ dividing $a_0$, there is an integer $\lambda$ coprime to $p$ such that (\ref{eq:p-adic}) holds.
In other words, if $p|a_0$ and $p^{\alpha}|| \delta$, we have to check if $P$ has a root $\pmod{p^{2\alpha+1}}$ that is coprime to $p$. 

\begin{example}
 In \cite[Example 2]{bb}, it is shown that the polynomial 
 \[
  P(x)=(x^3-19)(x^2+x+1)
 \]
is intersective. In this case, $a_0=19$ and $\delta=3^4 \cdot 19^2$. It is also shown that the congruence
\[
 x^2+x+1 \equiv 0 \pmod{19^3} 
\]
has a solution. Clearly, this solution is coprime to 19. Therefore, $P$ is intersective of the second kind.
\end{example}

\begin{example}
It is known \cite[p. 3]{bs} that
\[
 P(x)=(x^2-13)(x^2-17)(x^2-221)
\]
is intersective of the first kind. In this case, $\delta=2^6 \cdot 13^2 \cdot 17^2$ and $a_0=-13^2 \cdot 17^2$.

The congruences $x^2-13 \equiv 0 \pmod{17^5}$ and $x^2-17 \equiv 0 \pmod{13^5}$ have solutions which are coprime to 17 and 13, respectively. 
Therefore, $P$ is intersective of the second kind.
\end{example}

We remark that there are polynomials without integer roots which are intersective of the first kind but not of the second kind. For example, it is shown in \cite{rabayev}
that the polynomial
\[
P(x)=(x^4-5x^2+x+4)(x^3-10x^2+9x-1)
\]
is intersective. However, 1 is not a root of $P$ mod 2, and $P$ is not intersective polynomials of the second kind.

\section{Preparatory lemmas} \label{sec:lemmas}
Throughout this paper, we will be working with primes in congruence classes and the weights
\begin{equation} \label{eq:lambda}
\lambda_{m,b}(n)= \begin{cases}
                                                                     \log(mn+b), & \hbox{if  $mn+b$ is prime,} \\
                                                                     0, & \hbox{otherwise,}
                                                              
                                                                 \end{cases}
\end{equation}
where $\gcd(b,m)=1$ and $0 \leq b < m$.

We have the following lemma, which is useful for moduli as large as a small power of $N$.

\begin{lemma}\label{lem:linnik}
 We have
 \[
  Nm^{-2} \ll \sum_{n=1}^N \lambda_{m,b}(n) \ll Nm
 \]
for some constant $L>0$, whenever $m \leq N^{1/L}$.
\end{lemma}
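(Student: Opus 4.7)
The plan is to interpret the sum as a restricted Chebyshev theta-function on the arithmetic progression $b \pmod{m}$. Setting
$$\theta(x;m,b) = \sum_{\substack{p \leq x \\ p \equiv b \pmod{m}}} \log p,$$
we have $\sum_{n=1}^N \lambda_{m,b}(n) = \theta(mN+b;m,b) + O(\log m)$, since the only prime $p \equiv b \pmod{m}$ possibly missed by imposing $n \geq 1$ is $p = b < m$. Both the upper and lower bounds then become routine estimates on $\theta(x;m,b)$.

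For the upper bound, I would apply the Brun--Titchmarsh inequality
$$\pi(x;m,b) \ll \frac{x}{\phi(m)\log(x/m)}$$
at $x = mN + b \asymp mN$. The hypothesis $m \leq N^{1/L}$ implies $x/m \geq N$, so $\log(x/m) \geq \log N$ and $\log(mN+b) \leq (1+1/L)\log N$. Weighting each prime by $\log p \leq \log(mN+b)$ therefore gives
$$\theta(mN+b;m,b) \ll \frac{mN}{\phi(m)} \leq mN,$$
which is the claimed upper bound (and in fact somewhat wasteful, since $m/\phi(m) = O(\log\log m)$).

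For the lower bound, I would invoke the standard quantitative form of Linnik's theorem: there exist absolute constants $L_0 \geq 1$ and $c > 0$ such that, for every $m \geq 1$, every $b$ with $\gcd(b,m)=1$, and every $x \geq m^{L_0}$,
$$\theta(x;m,b) \geq c\,\frac{x}{\phi(m)}.$$
Choosing $L = L_0 + 1$, the condition $m \leq N^{1/L}$ forces $mN \geq m^{L+1} \geq m^{L_0}$, so
$$\sum_{n=1}^N \lambda_{m,b}(n) \;\gg\; \frac{mN}{\phi(m)} \;\geq\; \frac{mN}{m} \;=\; N \;\geq\; \frac{N}{m^2}.$$

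The only substantive point is the appeal to Linnik's theorem in the form giving a lower bound for $\theta(x;m,b)$ uniformly over $m$ in the full range up to a fixed power of $x$; everything else is bookkeeping, and the exponent $-2$ in $Nm^{-2}$ is far from sharp (the argument yields $\gg N/\phi(m)$).
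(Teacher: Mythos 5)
Your decomposition and the upper bound are fine and match the paper's (the paper in fact gets the upper bound even more cheaply, by dropping the congruence condition entirely and using $\sum_{n\le N}\lambda_{m,b}(n)\le\theta(mN+b)\ll mN$ from the prime number theorem; Brun--Titchmarsh is not needed). The problem is in the lower bound: the statement you invoke, namely that there exist absolute $L_0$ and $c>0$ with
\[
\theta(x;m,b)\ \ge\ c\,\frac{x}{\phi(m)}\qquad\text{for all }x\ge m^{L_0},
\]
is \emph{not} a known theorem, and calling it ``the standard quantitative form of Linnik's theorem'' is a genuine error. A full-density lower bound of size $x/\phi(m)$ uniformly for $m$ up to a fixed power of $x$ is blocked by the possible existence of an exceptional (Siegel) zero $\beta$ of a real character mod $m$: the explicit formula gives a main term $\frac{x}{\phi(m)}\bigl(1-\chi(b)\frac{x^{\beta-1}}{\beta}\bigr)$, which for $\chi(b)=1$ and $\beta$ close to $1$ is only of size roughly $\frac{x}{\phi(m)}(1-\beta)\log x$. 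What the proof of Linnik's theorem actually yields (this is the content of the Iwaniec--Kowalski reference the paper cites) is a lower bound that loses a power of $m$, of the shape $\theta(x;m,b)\gg x/(\sqrt{m}\,\phi(m)\log m)$ or similar, valid for $x\ge m^{L}$.

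The good news is that your argument survives once you substitute the correct input: with $x=mN+b\asymp mN$ and $x\ge m^{L}$ guaranteed by $m\le N^{1/L}$, a bound of the form $\theta(x;m,b)\gg x\,m^{-c}/\phi(m)$ with $c\le 1$ still gives $\sum_{n\le N}\lambda_{m,b}(n)\gg N m^{-2}$, and this is exactly why the lemma is stated with the (deliberately lossy) exponent $-2$ rather than with $N/\phi(m)$. Correspondingly, your closing remark that the argument ``yields $\gg N/\phi(m)$'' is wrong for the same reason: unconditionally one cannot do better than losing a power of $m$ in this range, and the $m^{-2}$ is not mere sloppiness but a reflection of the exceptional-zero obstruction.
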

\begin{proof}
 The upper bound follows trivially from the prime number theorem, and the lower bound follows from the proof of Linnik's theorem (see, for instance, \cite[Chapter 18]{iwaniec}). 
\end{proof}

Next we recall some lemmas we used in \cite{lespen} which we will use again in this paper. The first two lemmas relate Diophantine inequalities to exponential sums.

\begin{lemma}[{\cite[Theorem 2.2]{baker}}] \label{lem:montgomery}
Let $M$ and $N$ be a positive integers. Consider a sequence of real numbers $x_1, \ldots, x_N$ and weights $c_1, \ldots, c_{N} \geq 0$. 
Suppose that $\|x_{i}\| \geq M^{-1}$ for all $i=1, \ldots, N$. Then there exists $1 \leq t \leq M$ such that
\[
 \left| \sum_{n=1}^{N} c_{n} e(t x_{n})\right| \geq \frac{1}{6M} \displaystyle \sum_{n=1}^{N}c_n.
\]
\end{lemma}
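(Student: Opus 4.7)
The strategy is a Fourier-analytic kernel argument of the type pioneered by Montgomery. Set $C = \sum_n c_n$ and $S(t) = \sum_n c_n e(tx_n)$; since $|S(-t)| = |S(t)|$, producing $t \in \{1,\ldots,M\}$ with $|S(t)| \geq C/(6M)$ is equivalent to bounding $\max_{0 < |t| \leq M}|S(t)|$ from below by $C/(6M)$.

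The plan is to introduce a real-valued trigonometric polynomial $\Phi(x) = \sum_{|t|\leq M} \hat\Phi(t)\, e(tx)$ of degree at most $M$ with the following two properties:
\begin{enumerate}[(i)]
\item $\Phi(x) \leq 0$ whenever $\|x\| \geq 1/M$, so that $\Phi(x_n) \leq 0$ for every $n$ under the hypothesis of the lemma;
\item $\hat\Phi(0) > 0$, and $\sum_{0 < |t| \leq M}|\hat\Phi(t)| \leq 6M\,\hat\Phi(0)$.
\end{enumerate}
Granted such a $\Phi$, the weighted sum $\sum_n c_n \Phi(x_n)$ is non-positive, while its Fourier expansion equals $\hat\Phi(0)\,C + \sum_{0<|t|\leq M}\hat\Phi(t)\,S(t)$; rearranging and applying the triangle inequality gives
\[
\hat\Phi(0)\, C \leq \Bigl|\sum_{0 < |t| \leq M}\hat\Phi(t)\,S(t)\Bigr| \leq \max_{0 < |t| \leq M}|S(t)| \cdot \sum_{0 < |t| \leq M}|\hat\Phi(t)|,
\]
so that (ii) produces the claimed lower bound $\max_t |S(t)| \geq C/(6M)$ for some $1 \leq t \leq M$.

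The construction of $\Phi$ leverages the Fej\'er kernel
\[
F_M(x) = \frac{1}{M}\left(\frac{\sin \pi M x}{\sin \pi x}\right)^2 = \sum_{|t|<M}\left(1 - \frac{|t|}{M}\right) e(tx),
\]
which is non-negative with $F_M(0)=M$, $\int F_M = 1$, and, via the elementary inequality $|\sin \pi x| \geq 2\|x\|$, satisfies $F_M(x) \leq M/4$ on $\{\|x\| \geq 1/M\}$. The factor-of-four gap between the value at the origin and the value outside a $1/M$-neighborhood is exactly the slack needed in order to assemble a Selberg-type extremal polynomial (positively concentrated near $0$, non-positive elsewhere) satisfying both (i) and (ii). Producing the kernel with the explicit constant $6M$ in (ii) is the main technical hurdle; the construction is carried out in the proof of Theorem~2.2 in Baker's monograph \cite{baker}, which we invoke.
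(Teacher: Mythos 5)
Your proposal is consistent with the paper's treatment: the paper gives no proof of this lemma at all, simply quoting it as Theorem~2.2 of Baker's monograph, and your argument likewise defers the decisive step (the explicit kernel with the constant $6M$) to that same source. The surrounding sketch --- testing the weighted sum against a nonpositive trigonometric polynomial concentrated at $0$ and comparing Fourier coefficients --- is the standard route and contains no errors, so there is nothing to correct.
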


The next lemma is a multidimensional generalization of the previous one. It is a consequence of \cite[Lemma 5]{harman}. 
The same result could be obtained from the machinery in \cite{bmv} with better constants.

\begin{lemma} \label{lem:alternative}
Suppose $l>1$, $\Lambda$ is a lattice of full rank in $\R^{l}$, and $B_{l}$ is the unit ball in $\R^{l}$. 
Consider $\vx_1, \ldots, \vx_N \in \R^{l}$ and weights $c_1, \ldots, c_{N} \geq 0$. Let
\[
 S_{\vp}=\sum_{n=1}^{N} c_n e(\vx_n \cdot \vp).
\]
Suppose $\vx_i \not \in \Lambda + B_l$ for all $i=1, \ldots, N$. Then there is a primitive point $\vp$ in the dual lattice $\Pi$ of $\Lambda$ such that $|\vp|\ll_{l} 1$ and an integer $1 \leq t \ll_{l} \frac{1}{|\vec{p}|}$ such that
\begin{equation} \label{eq:st}
 |S_{t \vp}| \gg_{l} \det(\Lambda)^{-1} \sum_{n=1}^{N} c_n   .
\end{equation}
\end{lemma}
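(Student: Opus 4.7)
The plan is to derive this from Harman's Lemma 5 by Poisson summation with a carefully chosen nonnegative test function on $\R^l$. Concretely, I would start by building a smooth nonnegative function $\psi\colon \R^l\to\R$ supported in the closed unit ball $B_l$ with $\psi(\vec 0)>0$ and with $\hat\psi\ge 0$; the standard construction is $\psi = \chi * \check\chi$ for a bump $\chi$ supported in the ball of radius $1/2$, giving $\hat\psi = |\hat\chi|^2 \ge 0$, which is rapidly decreasing. Its $\Lambda$-periodization $\Phi(\vec x) := \sum_{\vec y\in\Lambda} \psi(\vec x-\vec y)$ satisfies, by Poisson summation,
\[
\Phi(\vec x)=\det(\Lambda)^{-1}\sum_{\vec p\in \Pi}\hat\psi(\vec p)\,e(\vec x\cdot\vec p).
\]

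The next step is to exploit the hypothesis. Since $\vec x_n\notin \Lambda+B_l$ means $|\vec x_n-\vec y|>1$ for every $\vec y\in\Lambda$, we have $\psi(\vec x_n-\vec y)=0$ throughout, so $\Phi(\vec x_n)=0$ for every $n$. Weighting by $c_n$ and using the Poisson expansion gives
\[
0=\sum_n c_n\Phi(\vec x_n)=\det(\Lambda)^{-1}\left[\hat\psi(\vec 0)\sum_n c_n+\sum_{\vec p\ne\vec 0}\hat\psi(\vec p)S_{\vec p}\right].
\]
Rearranging, taking absolute values, and pulling out the maximum (using $\hat\psi\ge 0$) yields
\[
\max_{\vec p\in\Pi\setminus\{\vec 0\}}|S_{\vec p}|\ge \frac{\hat\psi(\vec 0)\sum_n c_n}{\sum_{\vec p\ne\vec 0}\hat\psi(\vec p)}.
\]
Evaluating the denominator by applying Poisson once more at $\vec 0$, $\sum_{\vec p\in\Pi}\hat\psi(\vec p)=\det(\Lambda)\sum_{\vec y\in\Lambda}\psi(\vec y)\ll_l\det(\Lambda)$, and using the rapid decay of $\hat\psi$ to localize the dominant contribution to $|\vec p|\ll_l 1$, I would obtain some $\vec q\in\Pi\setminus\{\vec 0\}$ with $|\vec q|\ll_l 1$ and $|S_{\vec q}|\gg_l \det(\Lambda)^{-1}\sum_n c_n$.

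To match the stated form I would then decompose $\vec q=t\vec p$ with $\vec p\in\Pi$ primitive and $t\in\Z_{\ge1}$; since $t\ge 1$, we have $|\vec p|\le|\vec q|\ll_l 1$ and $t=|\vec q|/|\vec p|\ll_l 1/|\vec p|$, as required. The main obstacle is justifying the bound $\sum_{\vec y\in\Lambda}\psi(\vec y)=O_l(1)$: this can fail in general when $\Lambda$ has many short vectors, but in that regime either the covering radius of $\Lambda$ is at most $1$ (so the hypothesis is vacuous and the conclusion trivial) or one must shrink the support of $\psi$ and work with an adapted reduced basis to absorb the excess into an $l$-dependent constant. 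This is precisely the delicate step handled in Harman's Lemma 5, from which our formulation follows with the explicit constants built into the implied $\ll_l$ and $\gg_l$.
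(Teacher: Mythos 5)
Your strategy---Poisson summation against a nonnegative bump whose Fourier transform is also nonnegative---is the Barton--Montgomery--Vaaler route that the paper explicitly mentions as an alternative in the remark following the lemma; the paper's own proof simply quotes Harman's Lemma 5 together with a lattice-point count from Schmidt. The trouble is that your execution has a genuine gap exactly where you flag it, and the fix you sketch does not work. From your rearrangement one gets $\max_{\vec{p}\ne\vec{0}}|S_{\vec{p}}|\ge \hat\psi(\vec{0})\sum_n c_n\big/\sum_{\vec{p}\ne\vec{0}}\hat\psi(\vec{p})$, and by Poisson the denominator equals $\det(\Lambda)\sum_{\vec{y}\in\Lambda}\psi(\vec{y})-\hat\psi(\vec{0})$; to reach the stated bound you must show $\sum_{\vec{y}\in\Lambda}\psi(\vec{y})\ll_l\hat\psi(\vec{0})$, i.e.\ essentially that $\Lambda$ has $O_l(1)$ points in the unit ball. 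The hypothesis does not give this, and your dichotomy is false: take $\Lambda=\epsilon\Z\times R\Z\subset\R^2$ with $\epsilon$ small and $R$ large. It has about $\epsilon^{-1}$ points in $B_2$, yet its covering radius is about $R/2$, so $\Lambda+B_2\ne\R^2$ and the hypothesis is far from vacuous; your argument then loses a factor of $\#(\Lambda\cap B_l)$ against the claimed bound. Shrinking the support of $\psi$ to radius $\rho<1$ only makes matters worse, since $\hat\psi(\vec{0})=\int\psi\le \mathrm{vol}(\rho B_l)\,\psi(\vec{0})$ while $\sum_{\vec{y}}\psi(\vec{y})\ge\psi(\vec{0})$, so the ratio you need degrades like $\rho^l$. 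Moreover, the localization of the maximizing frequency to $|\vec{p}|\ll_l 1$ (you cannot simply take the global maximum over $\Pi\setminus\{\vec{0}\}$) requires a count of dual lattice points in dyadic annuli that is likewise unjustified when $\Pi$ has short vectors.

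Handling such skew lattices is precisely the content of Harman's Lemma 5 and of Schmidt's Lemma 14B, and your closing sentence defers to Harman for exactly this step; at that point your argument collapses to the paper's one-line proof, and the preceding Poisson computation is neither complete nor needed. To make the Poisson route self-contained you would have to carry out the reduction to a lattice with $\lambda_1(\Lambda)\gg_l 1$ (or work with kernels adapted to a reduced basis of $\Lambda$, as in Barton--Montgomery--Vaaler), which is the real work and is absent here.
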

\begin{proof}
 This follows immediately from \cite[Lemma 5]{harman}, together with the observation that the number of points  $\vp \in \Pi$ satisfying $|\vp| \ll_l 1$ is $\ll_{l} \det(\Lambda)^{-1}$ (\cite[Lemma 14B]{schmidt}).
\end{proof}

\begin{remark}
 In \cite[Lemma 5]{harman}, the result is stated for the hypercube $[-1,1]^{l}$ instead of $B_l$, but for the purpose of induction, it is more convenient to work with the unit ball 
 since the intersection of a ball and a hyperplane is still a ball.
\end{remark}

In \cite{lespen}, we used a weaker version of Lemma \ref{lem:alternative} proved by Schmidt, in which we have $|\vp|\leq N^{\epsilon}$ 
and the sum on the right hand side of (\ref{eq:st}) is off by a factor of $N^{\epsilon}$. 
The added strength of Lemma \ref{lem:alternative} is immaterial for our purpose, but we mention it since it can be useful in other circumstances 
(e.g., when one does not expect polynomial savings for  Diophantine inequalities).

The next lemma modifies a systems of polynomials so that the coefficients interact in a nice way. 
We say that a system of polynomials $(g_1, \ldots, g_k)$ is \textit{nice} if $\deg g_1 < \deg g_2 < \cdots < \deg g_{k}$ and the coefficient of $x^{\deg g_{i}}$ in $g_{j}$ is 0 for $i \neq j$ (it is trivially 0 if $j<i$). 

\begin{lemma}[{\cite[Lemma 1]{lespen}}] \label{lem:change}
Suppose $r, d \in \Z$ and  $f_1, \ldots, f_k \in \Z[x]$ with $\deg f_1 < \deg f_2 < \cdots < \deg f_{k}$. There exists a $k \times k$ matrix $T$ and polynomials $g_1, \ldots, g_k \in \Z[x]$, depending on $d$ and $r$, satisfying the following properties:
\begin{enumerate}
	\item \label{p1} $T \begin{pmatrix} f_1(dx+r) \\ \vdots \\ f_{k}(dx+r) \end{pmatrix} = \begin{pmatrix} g_1(x) \\ \vdots \\ g_{k}(x) \end{pmatrix}$
	\item \label{p2} $T$ is lower triangular with integer entries. All its diagonal entries are equal to $c$, where $c$ is an integer constant $($depending only on the the coefficients of $f_i)$.  
Actually, the $(i,j)$ entry of $T$ is $c_{ij}r^{\deg f_j-\deg f_i}$ if $i \leq j$ and $0$ otherwise, where $c_{ij}$ is an integer depending only on the coefficients of $f_1, \ldots, f_k$. 
	\item \label{p3} $g_1, \ldots, g_k$ form a nice system. Also, $\lead(g_i)=c d^{\deg f_i}\lead(f_i)$ for all $1 \leq i \leq k$.  
\end{enumerate}
\end{lemma}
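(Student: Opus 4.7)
The natural approach is induction on $k$. The base case $k=1$ is immediate: take $T=(1)$, $g_1=f_1(dx+r)$, and $c=1$.

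For the inductive step, set $n_i:=\deg f_i$ and write $f_i(x)=\sum_m b_{i,m}x^m$. A binomial expansion yields the key identity
\[
f_i(dx+r)=\sum_{l=0}^{n_i} d^{l} P_{i,l}(r)\, x^{l},\qquad P_{i,l}(r):=\sum_{m=l}^{n_i} b_{i,m}\binom{m}{l} r^{m-l}\in\Z[r],
\]
so that $P_{i,l}$ has degree $n_i-l$ in $r$ and $P_{i,n_i}(r)=\lead(f_i)$. Applying the inductive hypothesis to $f_1,\dots,f_{k-1}$ produces a nice system $g_1,\dots,g_{k-1}$ arising from a lower triangular integer matrix whose diagonal entries all equal some constant $c_0$.

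The plan is then to construct $g_k$ via the ansatz $g_k = c\,f_k(dx+r)+\sum_{j<k}\beta_j g_j$. Since $(g_j)_{j<k}$ is nice, the coefficient of $x^{n_l}$ in $g_j$ vanishes for $j\neq l$ and equals $c_0 d^{n_l}\lead(f_l)$ for $j=l$. Forcing the coefficient of $x^{n_l}$ in $g_k$ to vanish for each $l<k$ then pins down
\[
\beta_l = -\frac{c\,P_{k,n_l}(r)}{c_0\,\lead(f_l)}.
\]
Choosing $c:=c_0\prod_{l<k}\lead(f_l)$ renders each $\beta_l$ an integer polynomial in $r$ of degree $n_k-n_l$ whose coefficients depend only on those of $f_1,\dots,f_k$. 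A final rescaling of the first $k-1$ rows of the inductive matrix (and of $g_1,\dots,g_{k-1}$) by $c/c_0\in\Z$ equalises all diagonal entries; niceness is preserved because scaling a row does not change which coefficient slots are already zero. This yields the desired lower triangular integer matrix with constant diagonal $c$, below-diagonal entries that are polynomials in $\Z[r]$ of the claimed degree profile, and leading coefficients $\lead(g_i)=c\,d^{n_i}\lead(f_i)$ for every $i$.

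The only delicate point is maintaining integrality throughout: each elimination step divides by a leading coefficient $\lead(f_l)$, so $c$ has to be accumulated as a product of the relevant leading coefficients. Once this bookkeeping is arranged, the rest of the argument is essentially Gaussian elimination performed one degree slot at a time.
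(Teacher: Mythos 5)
Your argument is correct and is essentially the standard proof of this lemma: induct on $k$, expand $f_i(dx+r)$ by the binomial theorem, and eliminate the coefficient of $x^{\deg f_l}$ for each $l<i$ by triangular Gaussian elimination, absorbing the product of the leading coefficients $\lead(f_l)$ into the constant $c$ so that all entries stay in $\Z$; your bookkeeping of the rescaling of the earlier rows and of $\lead(g_i)=c\,d^{\deg f_i}\lead(f_i)$ is accurate. The one point where your construction departs from the letter of part (2) is that your off-diagonal entries $T_{il}$ ($l<i$) come out as integer-coefficient polynomials in $r$ of degree $\deg f_i-\deg f_l$ rather than single monomials $c_{il}\,r^{\deg f_i-\deg f_l}$; this is not a defect of your proof, since the monomial form as printed cannot hold in general (for $f_1=x$, $f_2=x^2+x$ the unique admissible entry is $T_{21}=-c(2r+1)$, and the clause as stated also has the triangularity indices reversed), and the only features of $T$ used elsewhere in the paper are integrality, the lower triangular shape with constant diagonal $c$, and the degree bound in $r$ --- all of which your construction supplies.
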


Also, without loss of generality, we will assume throughout this paper that
polynomials $h_1, \ldots, h_k$ in Theorem \ref{th:mainresult} are of distinct degree. Indeed, let $g_1,\ldots, g_s \in \Z[x]$ be polynomials of distinct degree 
which form a basis of the $\Z$-module generated by $h_1, \ldots, h_k$. Then there is a $k \times s$ integer matrix $M$ such that  
\[ \begin{pmatrix} h_1(n) \\ \vdots \\ h_{k}(n) \end{pmatrix} = M \begin{pmatrix} g_1(n) \\ \vdots \\ g_{s}(n) \end{pmatrix}\]
It is easy to see that if $h_1, \ldots, h_k$ satisfy the condition of Theorem \ref{th:mainresult}, then so do $g_1,\ldots, g_s$. Also, if $g_1,\ldots, g_s$ satisfy
the conclusion of Theorem \ref{th:mainresult}, then so do $h_1, \ldots, h_k$.

\section{A Weyl-type estimate} \label{sec:weyl}
Throughout this section and the next, we write $f(x)=\alpha_1 x+\cdots+\alpha_k x^k \in \R[x]$, where $\alpha_k \neq 0$. We first recall a Weyl-type estimate for an exponential sum over prime numbers.

\begin{lemma}
\label{harman}
Suppose that $q \in \mathbb{N}$ and $\|q\alpha_k \|<q^{-1}$.  For $k>1$,
$$\sum_{\substack{  1 \leq p \leq N \\ \text{$p$ \rm is prime}}} (\log p) \, e(f(p)) \ll N^{1+\epsilon} \left(q^{-1} +N^{-1/2}+qN^{-k} \right)^{4^{1-k}},$$
and for $k=1$,
$$\sum_{\substack{  1 \leq p \leq N \\ \text{$p$ \rm is prime}}} (\log p) \, e(f(p)) \ll N (\log N)^4 \left( q^{-1/2}+N^{-1/5}+N^{-1/2}q^{1/2} \right).$$
\end{lemma}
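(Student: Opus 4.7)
The plan is to follow the classical Vinogradov--Vaughan approach for estimating exponential sums over primes. The key tool is Vaughan's identity, which decomposes the von Mangoldt-weighted sum $\sum_{n \leq N} \Lambda(n) e(f(n))$ (equivalent up to a negligible contribution from prime powers to the sum of $(\log p) e(f(p))$) into so-called Type I sums of shape $\sum_{m \leq M} a(m) \sum_{n \leq N/m} e(f(mn))$ and Type II sums of shape $\sum_m \sum_n a(m) b(n) e(f(mn))$ with $m, n$ in suitable dyadic ranges and $|a(m)|, |b(n)| \ll N^\epsilon$.

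For each Type I sum, I would treat the inner sum over $n$ as a Weyl sum for the polynomial $f(mn) = \sum_{j=1}^k (\alpha_j m^j) n^j$ with leading coefficient $\alpha_k m^k$. Applying Weyl's inequality to this inner sum in the case $k > 1$ and extracting a rational approximation to $\alpha_k m^k$ from the hypothesis $\|q\alpha_k\|<q^{-1}$ (together with an elementary divisibility argument to handle the factor $m^k$), then summing over $m$, produces a contribution of the claimed shape.

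For the Type II sums I would apply Cauchy--Schwarz in the outer variable $m$, expand the resulting square to obtain a difference sum in the inner variable, and then iterate the Weyl differencing process $k-1$ more times. After all these steps one is left with a linear exponential sum whose coefficient is a nonzero multiple of $\alpha_k$, which is bounded via the diophantine hypothesis. Since the initial Cauchy--Schwarz step and each of the $k-1$ subsequent differencings square the exponent of savings, one obtains the factor $4^{1-k}$ in the final bound, in contrast to the $2^{1-k}$ saving of the classical Weyl inequality over the integers. Optimizing the cutoff $M$ then yields the stated estimate.

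The case $k = 1$ is the classical bound for $\sum_{p \leq N} (\log p) e(\alpha p)$ originally due to Vinogradov, with the explicit power $(\log N)^4$ arising from a careful application of Vaughan's identity (as in, for instance, Chapter 3 of Harman's book on prime-detecting sieves, or Vaughan's monograph). I expect the main obstacle to be not any new arithmetic input, since the only diophantine information used is the single approximation $\|q\alpha_k\| < q^{-1}$, but rather the bookkeeping inside Vaughan's decomposition and the optimization of the dyadic parameter $M$ that yields precisely the exponent $4^{1-k}$. Both estimates are essentially available in the literature and can be quoted directly once the Type I/II reduction is performed.
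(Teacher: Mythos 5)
The paper does not prove this lemma from scratch: it simply quotes \cite[Theorem 1]{harman2} for the case $k>1$ and \cite[Theorem 3.1]{vaughanbook} for $k=1$, and your outline is exactly the standard proof of those two quoted results (Vaughan's identity, Type I/Type II sums, Cauchy--Schwarz plus iterated Weyl differencing yielding the exponent $4^{1-k}$, and the classical Vinogradov bound with $(\log N)^4$ in the linear case). So your proposal is correct and is essentially the same approach, merely carried out in outline rather than cited.
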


\begin{proof}
This is a combination of \cite[Theorem 1]{harman2} due to Harman and \cite[Theorem 3.1]{vaughanbook} due to Vinogradov.
\end{proof}

We now prove the following Weyl-type estimate which is applicable to exponential sums over primes in arithmetic progression.  The proof is a generalization of the approach in \cite[Section 6]{ruzsa-sanders}.  In the linear case, one could instead use an exponential sum estimate by Balog and Perelli \cite{balog}.

\begin{theorem} \label{th:weyl}
Suppose that $m \leq N$ and that $a$ and $q$ are integers such that $(a,q)=1$ and $\left| \alpha_k - \frac{a}{q} \right| \leq \frac{1}{q^2}$.
For $k>1$,
$$\sum_{n=1}^N \lambda_{m,b}(n) e(f(n)) \ll (Nm)^{1+\epsilon} \left(q^{-1} +(Nm)^{-1/2}+qN^{-k} \right)^{4^{1-k}},$$
and for $k=1$,
$$\sum_{n=1}^N \lambda_{m,b}(n) e(f(n)) \ll Nm (\log N)^4 \left( q^{-1/2}+(Nm)^{-1/5}+N^{-1/2}q^{1/2} \right).$$
\end{theorem}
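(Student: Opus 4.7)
The plan is to run the standard Vinogradov--Vaughan argument that underlies Lemma \ref{harman}, but now for the weight $\lambda_{m,b}(n)$ in place of $\Lambda(n)$, i.e., for primes restricted to the arithmetic progression $\{mn+b : n \ge 1\}$. First I would substitute $p = mn+b$ and replace $\log p$ by $\Lambda(p)$ (the prime-power contribution is $O((mN)^{1/2}\log(mN))$, which is absorbable into the final bound), reducing the problem to estimating
\[
\sum_{\substack{p \le mN+b \\ p \equiv b \pmod m}} \Lambda(p)\, e\bigl(\tilde f(p)\bigr), \qquad \tilde f(p) := f\bigl((p-b)/m\bigr).
\]

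Next, apply Vaughan's identity to $\Lambda$ with cutoff parameters $U, V$ to split this into Type I sums of the form $\sum_{d \le U} \mu(d) \sum_{c \le X/d,\; dc \equiv b \pmod m} w(c)\, e(\tilde f(dc))$ (with appropriate weights $w(c)$) together with a Type II bilinear sum indexed by $d > U$, $e > V$, $de \le X$, $de \equiv b \pmod m$, where $X = mN+b$. Since $(b,m)=1$, the congruence condition forces $(d,m)=1$ and pins $c$ to a unique residue class $c_0 \pmod m$. Writing $c = mc'+c_0$ converts the phase $\tilde f(dc) = f((dc-b)/m)$ into $e(f(dc' + n_1))$ for some integer $n_1 = n_1(d,b,m)$, i.e., a polynomial in $c'$ of degree $k$ with leading coefficient $\alpha_k d^k$ evaluated over an interval of length $\ll N/d$.

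For the Type I sum I would apply Weyl's inequality to this rescaled polynomial, using that the Dirichlet approximation $|\alpha_k - a/q| \le q^{-2}$ transfers to an approximation of $\alpha_k d^k$ of comparable quality, yielding the desired saving. For the Type II sum, I would apply Cauchy--Schwarz in the longer variable, open the square and swap the order of summation to reduce to sums $\sum_d e(\tilde f(de_1) - \tilde f(de_2))$ with $d$ restricted to an arithmetic progression; Weyl's inequality applied to the resulting polynomial in $d$ delivers the matching bound, and the exponent $4^{1-k}$ arises from the combination of Cauchy--Schwarz and iterated Weyl differencing, exactly as in the proof of Lemma \ref{harman}. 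In the linear case $k=1$ one invokes the Balog--Perelli estimate \cite{balog} in place of differencing to obtain the sharper exponents stated.

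The main obstacle, and the only real departure from the proof of Lemma \ref{harman}, is the bookkeeping of the powers of $m$ through Vaughan's identity. The target bound carries only a single $(mN)^{1+\epsilon}$ factor (reflecting the mass $\sum_n \lambda_{m,b}(n) \ll Nm$ from Lemma \ref{lem:linnik}) while retaining $N^{-k}$, not $(mN)^{-k}$, in the saving factor. The point is that after the substitution $c = mc'+c_0$ the natural Weyl variable $c'$ has size at most $N/d$, with leading coefficient $\alpha_k d^k$ inherited directly from $\alpha_k$; thus $m$ enters only through the AP modulus and the mass prefactor, not through the Weyl input, which is controlled by the original approximation $a/q$ of $\alpha_k$. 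Balancing Type I and Type II by choosing $U, V \approx (mN)^{1/2}$ then yields the stated estimate.
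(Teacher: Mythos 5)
Your proposal takes a genuinely different route from the paper, and it leaves its own central difficulty unresolved. The paper does not rerun Vaughan's identity at all. Instead it detects the congruence $p\equiv b\pmod m$ by orthogonality of additive characters, writing
\begin{equation*}
\sum_{n=1}^N \lambda_{m,b}(n)e(f(n)) \;=\; m^{-1}\sum_{l=0}^{m-1}\ \sum_{\substack{p\le Nm+b\\ p\ \mathrm{prime}}}(\log p)\,e\Bigl(f\bigl(\tfrac{p-b}{m}\bigr)+l\,\tfrac{p-b}{m}\Bigr),
\end{equation*}
so that each inner sum is an \emph{unrestricted} prime exponential sum with phase polynomial $g(x)=f((x-b)/m)+l(x-b)/m$ of degree $k$ and leading coefficient $\beta_k=\alpha_k/m^k$ (resp.\ $(\alpha_1+l)/m$ when $k=1$). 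The only work is then to transfer the rational approximation: Dirichlet gives $|\beta_k-a'/q'|<1/(Qq')$ with $q'\le Q:=2m^kq$, and a short computation with $|a'qm^k-aq'|\le \tfrac12+q'/q$ forces $q'\ge q/2$; feeding $q/2\le q'\le 2m^kq$ into Lemma \ref{harman} applied over $[1,Nm+b]$ produces exactly the stated bound, with the $m^k$ in the upper bound for $q'$ cancelling against $(Nm)^{-k}$ to leave $qN^{-k}$. This reduction is a few lines and requires no re-examination of Type I/II sums.

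By contrast, you propose to reprove Harman's estimate from scratch for primes in a progression, and the step you yourself identify as ``the main obstacle'' --- the bookkeeping of powers of $m$ through Vaughan's identity --- is asserted rather than carried out. In particular, after the substitution $c=mc'+c_0$ the Type I phases have leading coefficient $\alpha_k d^k$ (and the Type II differenced phases have coefficients involving $\alpha_k$, the bilinear variables, and powers of $m$); the hypothesis $|\alpha_k-a/q|\le q^{-2}$ does not transfer to these coefficients ``directly,'' and one must re-run Dirichlet and control the new denominators, which is precisely where the claimed shape $\bigl(q^{-1}+(Nm)^{-1/2}+qN^{-k}\bigr)^{4^{1-k}}$ has to be verified. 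Your outline is a plausible blueprint for an independent proof, but as written it does not establish the theorem; the paper's character-orthogonality reduction bypasses all of this by quoting Lemma \ref{harman} as a black box.
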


\begin{proof} 
Note that 
\begin{equation}
\label{detector-trick}
\sum_{n=1}^N \lambda_{m,b}(n) e(f(n)) = m^{-1} \sum_{l=0}^{m-1} \sum_{\substack{  1 \leq p \leq Nm+b \\ \text{$p$ \rm is prime}}} (\log p) \, e\left(f\left( \frac{p-b}{m} \right) + l \left(\frac{p-b}{m} \right) \right).
\end{equation}
Let $0 \leq l < m$ and   $$g(x) = \beta_0+\beta_1 x + \cdots + \beta_k x^k = f\left( \frac{x-b}{m} \right) + l \left(\frac{x-b}{m} \right).$$  
Were $\alpha_k \in \mathbb{Z}$, then $q=1$, and the result would follow trivially from the prime number theorem.
Unless $k=1$ and $\alpha_k +l=0$, $g(x)$ is a degree $k$ polynomial with leading coefficient 
$$\beta_k =\begin{cases}   \frac{\alpha_k}{m^k}, & \text{if $k \geq 1$,} \\\frac{\alpha_1+l}{m}, & \text{if $k=1$.}
\end{cases}$$
By Dirichlet's approximation theorem, there exist an integer $a'$ and a natural number $q'\leq Q := 2m^kq$ satisfying $(a',q')=1$ and $\left|\beta_k - \frac{a'}{q'} \right| < \frac{1}{Qq'} \leq \frac{1}{q'^2}$.

\textbf{Case 1:} Suppose that $k>1$.  We have that
\begin{align*}
\left|\frac{a'}{q'}-\frac{a}{m^kq} \right| & \leq \left|\frac{a'}{q'}-\frac{\alpha_k}{m^k} \right| + \left|\frac{\alpha_k}{m^k}-\frac{a}{m^kq} \right|
\leq \frac{1}{Qq'} + \frac{1}{q^2m^k}. 
\end{align*}
Upon multiplying each side of the above inequality by $qq'm^k$, we find that
$$|a'qm^k-aq'| \leq \frac{m^k q}{Q} + \frac{q'}{q} = \frac{1}{2}+\frac{q'}{q}.$$
Since $a'qm^k-aq'$ is an integer, if $q' < \frac{q}{2}$, then  
$a'qm^k=aq'$.  Because $(a,q)=1$, this would imply that $q \mid q'$ and $q \leq q'$.  Thus, we must have $\frac{q}{2} \leq q'$.  We may bound the inner sum of (\ref{detector-trick}) by applying Lemma \ref{harman} with the Diophantine approximation $\frac{a'}{q'}$ to the leading coefficient $\beta_k$ of $g(x)$, and we obtain
\begin{align*}
\sum_{n=1}^N \lambda_{m,b}(n) e(f(n)) &\ll 
(Nm+b)^{1+\epsilon} \left(q'^{-1} +(Nm+b)^{-1/2}+q'(Nm+b)^{-k} \right)^{4^{1-k}}
\\ & \ll (Nm)^{1+\epsilon} \left(q^{-1} +(Nm)^{-1/2}+qN^{-k} \right)^{4^{1-k}}. 
\end{align*}

\textbf{Case 2:} Suppose that $k=1$.  Similar to our work in Case 1, by bounding $\left|\frac{a'}{q'}-\frac{a+lq}{mq} \right|$, one can show that $\frac{q}{2} \leq q'$, and the result follows by applying Lemma \ref{harman} to the inner sum of (\ref{detector-trick}).
\end{proof}

\begin{corollary}\label{cor:weyl}
Suppose $1 \leq m, M \leq N^{c}$ and
 \[
  \left| \sum_{n=1}^N \lambda_{m,b}(n) e(f(n)) \right| \geq \frac{N}{M}.
 \]
Then there is an integer $1\leq q \ll (Mm)^{C} N^\epsilon$ such that
$$\| q \alpha_k \| \ll (Mm)^C N^{\epsilon-k}.$$
Here $C$ and $c$ are  constants depending only on $k$.
\end{corollary}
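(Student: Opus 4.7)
The plan is to deduce the corollary from Theorem \ref{th:weyl} by a standard Dirichlet-approximation argument. First I would apply Dirichlet's approximation theorem with a parameter $Q$ (to be chosen below) to produce coprime integers $a$ and $q$ with $1 \leq q \leq Q$ and
\[
\left| \alpha_k - \frac{a}{q} \right| \leq \frac{1}{qQ} \leq \frac{1}{q^2},
\]
so that we are in a position to invoke Theorem \ref{th:weyl}.

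For $k > 1$, the Weyl estimate bounds the exponential sum by
\[
(Nm)^{1+\epsilon}\bigl(q^{-1} + (Nm)^{-1/2} + qN^{-k}\bigr)^{4^{1-k}}.
\]
I would choose $Q$ of the form $Q = N^{k}(Mm)^{-C_0}N^{-\epsilon_0}$ for constants $C_0, \epsilon_0$ depending on $k$, so that the third term satisfies $qN^{-k} \leq QN^{-k} = (Mm)^{-C_0}N^{-\epsilon_0}$ and is negligible compared to the target size. The middle term $(Nm)^{-1/2}$ is also negligible provided $c$ (the exponent in $m, M \leq N^c$) is taken sufficiently small in relation to $C_0$. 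Under the hypothesis $\bigl|\sum \lambda_{m,b}(n) e(f(n))\bigr| \geq N/M$, comparison with the Weyl bound forces
\[
q^{-1} + (Nm)^{-1/2} + qN^{-k} \gg \bigl(N^{-\epsilon}m^{-1-\epsilon}M^{-1}\bigr)^{4^{k-1}},
\]
and by the choice of $Q$ only the $q^{-1}$ term can supply this, yielding $q \ll (Mm)^{4^{k-1}} N^{\epsilon}$ after absorbing powers of $m^{\epsilon}$ into $N^{\epsilon}$ via $m \leq N^c$. Finally, from $|\alpha_k - a/q| \leq 1/(qQ)$ one reads off $\|q\alpha_k\| \leq 1/Q \ll (Mm)^{C}N^{\epsilon - k}$, as required.

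The case $k = 1$ is handled in exactly the same way using the second inequality of Theorem \ref{th:weyl}, with the appropriate choice of $Q$ (here $C = 2$ suffices, the logarithmic factor being absorbed into $N^{\epsilon}$).

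The only real obstacle is bookkeeping: one must pick $Q$, $C_0$, $\epsilon_0$ so that the two parasitic terms $(Nm)^{-1/2}$ and $qN^{-k}$ in the Weyl bound are strictly smaller than the lower bound forced by the hypothesis, leaving $q^{-1}$ as the only term that can be responsible. This is what dictates the smallness assumption $m, M \leq N^{c}$ with $c$ chosen small relative to the resulting constant $C$, and both constants ultimately depend only on $k$.
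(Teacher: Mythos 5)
Your argument is correct and is exactly the paper's proof: the authors simply state that the corollary ``follows from Theorem \ref{th:weyl} and Dirichlet's approximation theorem,'' with $C=2$ for $k=1$ and $C=4^{k-1}$ for $k>1$, which is precisely the Dirichlet-plus-Weyl bookkeeping you carry out. The constants you obtain and the role of the smallness condition $m,M\le N^{c}$ in suppressing the parasitic terms match what the paper intends.
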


\begin{proof}
This follows from Theorem \ref{th:weyl} and  Dirichlet's approximation theorem.  One finds that $C=2$ for $k=1$ and $C=4^{k-1}$ for $k>1$.
\end{proof}

\begin{corollary}\label{cor:weyl2}
Suppose that $1\leq L \leq N$, $1 \leq m,$ $M \leq N^{c}$, and $\alpha$ is such that
\[
  \left| \sum_{n=L+1}^{N} \lambda_{m,b}(n) e(f(n)) \right| \geq \frac{N}{M}.
\]
Then there is an integer $1\leq q \ll (M m)^{C} N^\epsilon$ such that
$$\| q \alpha_k \| \ll  (M m)^C N^{\epsilon-k}.$$ Here $C$ and $c$ are  constants depending only on $k$.
\end{corollary}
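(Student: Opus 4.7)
The natural plan is to reduce to Corollary \ref{cor:weyl} by splitting the partial sum as
\[
\sum_{n=L+1}^N \lambda_{m,b}(n) e(f(n)) \;=\; \sum_{n=1}^N \lambda_{m,b}(n) e(f(n)) \;-\; \sum_{n=1}^L \lambda_{m,b}(n) e(f(n)),
\]
and then applying the triangle inequality: at least one of the two complete sums has modulus at least $N/(2M)$.

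In the first case, Corollary \ref{cor:weyl} applies directly (with $M$ replaced by $2M$), and the desired $q$ and bound on $\|q\alpha_k\|$ are produced. In the second case, where $\bigl|\sum_{n=1}^L \lambda_{m,b}(n) e(f(n))\bigr|\geq N/(2M)$, the trick is to invoke Corollary \ref{cor:weyl} with $N$ replaced by $L$. The mild issue is the admissibility hypothesis $m,\,2M\leq L^{c}$: this is where the $N^c$ bound on $m$ and $M$ must be exploited. Using the trivial upper bound $\sum_{n=1}^L \lambda_{m,b}(n) \ll Lm$ from Lemma \ref{lem:linnik}, one obtains $Lm \gg N/M$, hence $L \gg N/(Mm) \gg N^{1-2c}$; choosing $c$ small enough in terms of $k$ makes $m,M \leq N^{c} \leq L^{c'}$ for some small $c'>0$, so Corollary \ref{cor:weyl} (applied with $N\to L$, $M\to 2M$) yields an integer $1\le q\ll (Mm)^C L^\epsilon \ll (Mm)^C N^\epsilon$ with $\|q\alpha_k\|\ll (Mm)^C L^{\epsilon-k}$.

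Finally, I would convert the $L^{-k}$ loss into the desired $N^{-k}$ bound by feeding back the inequality $L\gg N/(Mm)$: this gives $L^{-k}\ll (Mm)^k N^{-k}$, so
\[
\|q\alpha_k\|\ll (Mm)^{C+k}\, N^{\epsilon-k},
\]
which is the required estimate after relabeling $C+k$ as $C$ and slightly enlarging $\epsilon$.

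The main (and really the only) obstacle is the bookkeeping in the second case — one must verify that the admissibility requirement $m,M\leq L^{c}$ for the smaller parameter $L$ still holds, and then convert the weaker $L^{-k}$ savings into the desired $N^{-k}$ savings via the lower bound $L\gg N/(Mm)$. Both steps are mild and can be absorbed into the constants $c$ and $C$ of the statement.
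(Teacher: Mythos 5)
Your proposal is correct and follows essentially the same route as the paper: the paper also reduces to Corollary \ref{cor:weyl} via the triangle inequality (its Case 2(b) is exactly your second case, using $N/(2M)\geq L/(2M)$ to apply the corollary with $N$ replaced by $L$), bounds the initial segment trivially via Lemma \ref{lem:linnik} to get $L\gg N/(Mm)$, and then converts $L^{\epsilon-k}$ into $(Mm)^{O(1)}N^{\epsilon-k}$ just as you do. The only cosmetic difference is that the paper organizes the argument by first splitting on the size of $L$ rather than deriving the lower bound on $L$ from the largeness of the partial sum, and it leaves the admissibility check $m,M\leq L^{c}$ implicit, which you handle explicitly.
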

\begin{proof}

Clearly,
\begin{equation*} \label{eq:sum}
  \left| \sum_{n=L+1}^{N} \lambda_{m,b}(n) e(\alpha n) \right| \leq  \left| \sum_{n=1}^{N} \lambda_{m,b}(n) e(\alpha n) \right| + 
 \left| \sum_{n=1}^{L} \lambda_{m,b}(n) e(\alpha n) \right|.
\end{equation*}
By Lemma \ref{lem:linnik}, there exists a positive constant $U$ such that the second term is bounded by  $ULm$.  We split our analysis into two cases. 

\noindent \textbf{Case 1}: Suppose that $L \leq \frac{N}{2MUm}$. Then $ULm \leq \frac{N}{2M}$, and
\[
\frac{N}{M} \ll \left| \sum_{n=1}^{N} \lambda_{m,b}(n) e(\alpha n) \right|. 
\]
By Corollary \ref{cor:weyl}, there is an integer $1\leq q \ll (M m)^{C} N^\epsilon$ such that $\| q \alpha_k \| \ll (Mm )^C N^{\epsilon-k}$.

\noindent \textbf{Case 2}: $N \geq L \geq \frac{N}{2MUm}$. We are in one of the following two subcases.
\begin{enumerate}[(a)]
\item
\[
\left| \sum_{n=1}^{N} \lambda_{m,b}(n) e(\alpha n) \right| \geq \frac{N}{2M}.
\]
As in Case 1, the result follows.

\item
\[
\left| \sum_{n=1}^{L} \lambda_{m,b}(n) e(\alpha n) \right| \geq \frac{N}{2M} \geq \frac{L}{2M}.
\]
Again, by Corollary \ref{cor:weyl}, there is an integer $1\leq q \ll (M m )^{C} L^\epsilon \leq (M m)^{C} N^\epsilon $ such that 
\[
\| q \alpha_k \| \ll (Mm)^C L^{\epsilon-k} \ll (Mm)^C \left( \frac{N}{Mm} \right) ^{\epsilon-k}  \ll (Mm)^{C+k-\epsilon} N^{\epsilon-k}.
\]
\end{enumerate}
In each case, we have the desired result.
\end{proof}

\section{A simultaneous approximation result} \label{sec:simultaneous}
The main result of this section is the following theorem, which is of independent interest.

\begin{theorem}\label{th:simul}
Suppose that $1 \leq m, M \leq N^{c}$ and
 \[
  \left| \sum_{n=1}^N \lambda_{m,b}(n) e(f(n)) \right| \geq \frac{N}{M}.
 \]
Then there is an integer $1\leq q \ll (Mm )^{C} N^\epsilon$ such that
$$\| q \alpha_j \| \ll (Mm)^C N^{\epsilon-j}$$
for every $j=1, \ldots, k$.  Here $C$ and $c$ are  constants depending only on $k$.
\end{theorem}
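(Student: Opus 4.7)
I would prove Theorem \ref{th:simul} by induction on $j=1,2,\ldots,k$, establishing at the $j$-th stage the existence of an integer $q \ll (Mm)^{C_j} N^{\epsilon_j}$ such that $\|q\alpha_i\| \ll (Mm)^{C_j} N^{\epsilon_j - i}$ simultaneously for the top $j$ indices $i = k, k-1, \ldots, k-j+1$. The base case $j=1$ is precisely Corollary \ref{cor:weyl}. The theorem is the final case $j = k$.

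\textbf{Inductive reduction.} Suppose we have $q_0$ approximating $\alpha_k, \ldots, \alpha_{k-j+1}$, and write $q_0 \alpha_i = a_i + \theta_i$ with $a_i \in \Z$ and $|\theta_i|$ small for those $i$. Define $g(x) = f(x) - \sum_{i=k-j+1}^{k} (a_i/q_0) x^i$. Since $(a_i/q_0) n^i \equiv (a_i/q_0) r^i \pmod 1$ whenever $n \equiv r \pmod{q_0}$, splitting the sum into residue classes modulo $q_0$ gives
\[
\sum_{n=1}^{N} \lambda_{m,b}(n) e(f(n)) = \sum_{r=0}^{q_0 - 1} e(\Psi(r)) \sum_{\substack{1 \leq n \leq N \\ n \equiv r \pmod{q_0}}} \lambda_{m,b}(n) e(g(n)),
\]
for an explicit phase $\Psi(r)$. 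Pigeonholing over $r$ and substituting $n = q_0 s + r$ (so that $mn + b = (mq_0) s + (mr+b)$) converts the inner sum into a prime-in-AP sum with weights $\lambda_{mq_0, mr+b}(s)$ over $s \in [1, N/q_0]$, of size $\gg N/(Mq_0)$.

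\textbf{Reducing the degree.} The substituted polynomial $G(s) = g(q_0 s + r)$ splits as $\tilde g(s) + R(s)$, where $\tilde g(s) = \sum_{i=1}^{k-j} \alpha_i (q_0 s + r)^i$ has degree $k - j$ in $s$, and the remainder $R(s) = \sum_{i=k-j+1}^{k}(\alpha_i - a_i / q_0)(q_0 s + r)^i$ uses only the small coefficients $\alpha_i - a_i/q_0$. For $s \leq N/q_0$, each term of $R$ is bounded by $\|q_0\alpha_i\| N^i/q_0 \ll (Mm)^{C_j} N^{\epsilon_j}/q_0$. By replacing $q_0$ with a suitable multiple (which remains within the permitted range), I may assume this bound is $\ll 1/M$, so that $e(G(s)) = e(\tilde g(s))(1 + O(1/M))$ and the same lower bound $\gg N/(Mq_0)$ survives for the sum weighted by $e(\tilde g(s))$. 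Now $\tilde g$ has degree $k-j$ with leading coefficient $\alpha_{k-j} q_0^{k-j}$, and applying Corollary \ref{cor:weyl} to this shorter sum of length $\sim N/q_0$ with modulus $mq_0$ produces an integer $q'$ satisfying $\|q' q_0^{k-j} \alpha_{k-j}\| \ll (Mmq_0)^{C'} (N/q_0)^{\epsilon' - (k-j)}$.

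\textbf{Closing the induction.} Set $Q := q' q_0^{k-j}$. Unpacking the bound and absorbing powers of $q_0 \ll (Mm)^{C_j} N^{\epsilon_j}$ into fresh constants $C_{j+1}, \epsilon_{j+1}$ gives the desired estimate $\|Q\alpha_{k-j}\| \ll (Mm)^{C_{j+1}} N^{\epsilon_{j+1} - (k-j)}$; moreover, since $Q$ is a multiple of $q_0$, one has $\|Q\alpha_i\| \leq (Q/q_0)\|q_0 \alpha_i\|$, which remains within the permitted form for the previously-approximated indices $i = k, \ldots, k-j+1$. The main obstacle is bookkeeping: the constants $C_j$ and the small exponent $\epsilon_j$ grow at each of the $k$ inductive steps, and the enlargement of $q_0$ needed to suppress $R(s)$ must not push $q_0$ beyond $(Mm)^{O(1)} N^{O(\epsilon)}$. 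Since $k$ is fixed, this closes provided one begins with $\epsilon$ in the final statement sufficiently small relative to $k$ and the accumulated constants.
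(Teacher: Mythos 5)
Your overall strategy is the right one and matches the paper's: downward induction on the index, peeling off the already-approximated top coefficients via their rational approximations $a_i/q_0$, passing to residue classes modulo $q_0$ so that the rational parts become constant phases, and applying the Weyl estimate (Corollary \ref{cor:weyl}) to the resulting lower-degree polynomial with leading coefficient $q_0^{k-j}\alpha_{k-j}$. However, there is a genuine gap at the step where you discard the remainder $R(s)=\sum_{i>k-j}(\alpha_i-a_i/q_0)(q_0s+r)^i$. Over the full residue class, i.e.\ for $s$ up to $N/q_0$, each term of $R$ is bounded only by $\|q_0\alpha_i\|N^i/q_0\ll (Mm)^{C_j}N^{\epsilon_j}/q_0$, which for small $q_0$ (and $q_0=1$ is entirely possible) is a positive power of $N$, not $o(1)$. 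Your proposed fix --- replacing $q_0$ by a suitable multiple --- does not work: if $q_0'=Dq_0$ and $a_i'=Da_i$, then $|\alpha_i-a_i'/q_0'|=|\alpha_i-a_i/q_0|$, so the quality of the rational approximation to $\alpha_i$ (which is what controls $R$) is unchanged; the induction hypothesis gives no lower bound on $q_0$, only an upper bound, and hence no control on $|\alpha_i - a_i/q_0|\,N^i$ beyond $(Mm)^{C_j}N^{\epsilon_j}$.

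The paper's proof closes exactly this gap by a further subdivision: each residue class mod $q$ is cut into arithmetic progressions of length $L=\lfloor N^{1-2\epsilon}(Mm)^{-C_2}\rfloor$ with $C_2>C_1+1$. Over a single such progression $\{t,t+q,\dots,t+(L-1)q\}$ the variation of the discarded terms is $\ll Lq\,N^{i-1}\cdot\|q\alpha_i\|/q\ll L(Mm)^{C_1}N^{\epsilon-1}\ll N^{-\epsilon}$, which is genuinely negligible, at the acceptable cost of degrading $N/M$ to $L/M$ after pigeonholing over the $\approx N/L$ progressions. This in turn forces the use of Corollary \ref{cor:weyl2} (a Weyl estimate for sums over a subinterval $[N'-L+1,N']$ rather than $[1,N]$), which is why that corollary appears in the paper. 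If you insert this short-progression subdivision in place of working with entire residue classes, your argument becomes essentially the paper's proof of Proposition \ref{prop:induct}. A secondary point: even where $R$ is small, the comparison $e(G(s))=e(\tilde g(s))(1+O(\cdot))$ must be weighted against $\sum_s\lambda_{mq_0,\cdot}(s)$, which is of size about $Nm$ by Lemma \ref{lem:linnik}, not $N/(Mq_0)$; so the error tolerance has to be calibrated against that larger quantity, another piece of bookkeeping the short-progression normalization handles cleanly.
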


We will prove the following proposition by downward induction on $i$. When $i=1$, we recover Theorem \ref{th:simul}.

\begin{proposition} \label{prop:induct}
Let $i$ be an integer such that $1 \leq i \leq k$. Suppose that $M,m \leq N^c$ and 
\begin{equation} \label{eq:sum2}
  \left| \sum_{n=1}^N \lambda_{m,b}(n) e(f(n)) \right| \geq \frac{N}{M}.
\end{equation}
Then there is an integer $1\leq q \ll (Mm )^{C} N^\epsilon$ such that
$$\| q \alpha_j \| \ll  (Mm )^C N^{\epsilon-j}$$
for every $j=i, \ldots, k$.  Here $C$ and $c$ are  constants depending on $k$ and $i$.
\end{proposition}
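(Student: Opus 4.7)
We argue by downward induction on $i$, the base case $i=k$ being exactly Corollary~\ref{cor:weyl}. For the inductive step at $i$, the inductive hypothesis applied to (\ref{eq:sum2}) yields an integer $q_0$ with $1\leq q_0\ll (Mm)^{C_0}N^{\epsilon}$ and $\|q_0\alpha_j\|\ll (Mm)^{C_0}N^{\epsilon-j}$ for $j=i+1,\dots,k$; write $q_0\alpha_j=a_j+\eta_j$ with $a_j\in\Z$ and $|\eta_j|\ll (Mm)^{C_0}N^{\epsilon-j}$. The strategy is to extract from (\ref{eq:sum2}) a prime-weighted exponential sum whose phase is a polynomial of degree $i$ with leading coefficient $\alpha_i q_0^i$, and then apply Corollary~\ref{cor:weyl2} at degree $i$ to that sum.

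\textbf{Reducing to degree $i$.} Split (\ref{eq:sum2}) by residue class $n\equiv r\pmod{q_0}$ and write $n=q_0 a+r$. Since $mn+b=(mq_0)a+(mr+b)$ the sum becomes
\[
\sum_{n=1}^{N}\lambda_{m,b}(n)\,e(f(n))=\sum_{r=0}^{q_0-1}\sum_{a}\lambda_{mq_0,\,mr+b}(a)\,e\bigl(f(q_0 a+r)\bigr),
\]
so pigeonhole selects an $r$ (necessarily with $\gcd(mr+b,mq_0)=1$) whose inner sum has magnitude $\gg N/(q_0 M)$. For each $j\geq i+1$ and each integer $a$, the quantity $(a_j/q_0)\bigl((q_0 a+r)^j-r^j\bigr)$ is an integer, so modulo $\Z$
\[
f(q_0 a+r)\equiv g_r(a)+\kappa_r+h_r(a),
\]
where $g_r(a)=\sum_{j=1}^{i}\alpha_j(q_0 a+r)^j$ has degree $i$ in $a$ with leading coefficient $\alpha_i q_0^i$, $\kappa_r=\sum_{j=i+1}^{k}(a_j/q_0)r^j$ depends only on $r$, and $h_r(a)=\sum_{j=i+1}^{k}(\eta_j/q_0)(q_0 a+r)^j$ is a residual low-derivative phase. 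Partition the relevant range of $a$ into subintervals of common length $H$. A direct computation gives $|h_r'(a)|\ll (Mm)^{C_0}N^{\epsilon-1}$, so the oscillation of $h_r$ on any such interval $I$ is $\ll (Mm)^{C_0}HN^{\epsilon-1}$. Using Lemma~\ref{lem:linnik} to bound $\sum_{a}\lambda_{mq_0,mr+b}(a)\ll Nm$ and choosing $H\asymp N^{1-\epsilon'}/(Mm)^{D}$ with $D=D(k,i)$ sufficiently large, the cumulative error from replacing $e(h_r(a))$ by its value at an endpoint of $I$ is smaller than half the main term. A further pigeonhole then supplies an interval $I$ on which
\[
\Bigl|\sum_{a\in I}\lambda_{mq_0,\,mr+b}(a)\,e\bigl(g_r(a)\bigr)\Bigr|\gg \frac{H}{M'},\qquad M'\ll (Mm)^{D'}N^{\epsilon'}.
\]

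\textbf{Applying Corollary \ref{cor:weyl2} and assembling $q$.} The sum above is a prime-weighted exponential sum over an interval, with modulus $mq_0$ and phase a polynomial in $a$ of degree $i$ with leading coefficient $\alpha_i q_0^i$. Corollary~\ref{cor:weyl2}, used with $k$ there replaced by $i$, produces an integer $q^{*}$ with $1\leq q^{*}\ll (Mm)^{D''}N^{\epsilon}$ and $\|q^{*}\alpha_i q_0^i\|\ll (Mm)^{D''}N^{\epsilon-i}$. Set $q=q^{*}q_0^{i}$. The bound $\|q\alpha_i\|\ll (Mm)^{C}N^{\epsilon-i}$ is then immediate, and for $j\geq i+1$ we decompose
\[
q\alpha_j=q^{*}q_0^{i-1}(q_0\alpha_j)=q^{*}q_0^{i-1}a_j+q^{*}q_0^{i-1}\eta_j,
\]
where the first summand is an integer; hence $\|q\alpha_j\|\leq q^{*}q_0^{i-1}|\eta_j|\ll (Mm)^{C}N^{\epsilon-j}$ after absorbing the powers of $q_0$ and $q^{*}$ into new constants $C=C(k,i)$. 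The hypothesis $m,M\leq N^{c}$ for small $c=c(k,i)$ is used to guarantee $H\geq 1$ and that Lemma~\ref{lem:linnik} is applicable on each subinterval. \emph{Main obstacle:} the delicate step is the balancing choice of $H$ in the short-interval truncation—large enough that the weight $\lambda_{mq_0,mr+b}$ accumulates nontrivially on each subinterval (so Corollary~\ref{cor:weyl2} returns a nontrivial Weyl bound), yet small enough that the $h_r$-error is damped below the main term—and it is precisely this balancing, together with the subsequent propagation of the bounds through $q=q^{*}q_0^{i}$, that pins down the final constants $C$ and $c$ appearing in the statement.
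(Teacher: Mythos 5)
Your proposal is correct and follows essentially the same route as the paper: downward induction with Corollary \ref{cor:weyl} as the base case, restriction to an arithmetic progression modulo the denominator $q_0$ from the inductive hypothesis of length short enough that the degree $>i$ terms are constant modulo $1$ up to a negligible error, an application of Corollary \ref{cor:weyl2} to the resulting degree-$i$ sum with leading coefficient $\alpha_i q_0^i$, and the assembly $q=q^{*}q_0^{i}$. The only cosmetic differences are that you pigeonhole in two stages (residue class, then subinterval) where the paper does it in one, and you control the residual phase via its derivative rather than by subtracting its value at the progression's initial term; the bookkeeping of constants is the same in both arguments.
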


\begin{proof}
 When $i=k$, this is Corollary \ref{cor:weyl}. Suppose $1 \leq i < k$
 and that the proposition is true for $i+1$. That is, there are 
 integers $1 \leq q \ll (Mm)^{C_1} N^\epsilon$ and $a_{i+1}, \ldots, a_k$ such that 
\[
\left| \alpha_j-\frac{a_j}{q} \right| \leq (Mm )^{C_1} N^{\epsilon-j} q^{-1}
\]
for every $j=i+1, \ldots, k$, where $C_1$ is a positive constant depending only on $k$ and $i$. Our goal is to extend this approximation to $\alpha_i$ as well.

Let $L=\lfloor N^{1-2\epsilon} (Mm )^{-C_2} \rfloor$ for some constant $C_2>C_1+1$. 
We partition $\{1,\ldots,N\}$ into congruence classes mod $q$, and we split each partition class into arithmetic progressions of length $L$. 
We have arithmetic progressions $P_1, \ldots, P_J$, all of length $L$ and common difference $q$. In particular $J=\frac{N}{L}+O(q)$. There are at most $qL$ 
numbers in $\{1, \ldots, N\}$ not belonging to any of these arithmetic progressions. The contribution of these numbers to the LHS of (\ref{eq:sum2}) is
\[
\ll qL \log N \ll  (Mm)^{C_1-C_2} N^{1-\epsilon}  \log N \ll \frac{N^{1-\epsilon}\log N}{M}.
\]
Therefore 
\[
 \sum_{j=1}^J \left| \sum_{n \in P_j} \lambda_{m,b}(n) e(f(n)) \right| \gg \frac{N}{M}.
\]
In particular there is a progression $P=\{t, t+q, \ldots, t+(L-1)q\} \subset \{1, \ldots, N\}$ such that
\[
 \left|\sum_{n \in P} \lambda_{m,b}(n) e(f(n)) \right| \gg \frac{L}{M}.
\]
 On the other hand, 
\begin{eqnarray*}
\left| \sum_{n \in P} \lambda_{m,b}(n) e(f(n)) \right| &=& \left| \sum_{l=0}^{L-1} \lambda_{m,b}(t+lq) e \left( \sum_{j=1}^{k} (t+lq)^j \alpha_j \right) \right|\\
  &=& \left| \sum_{l=0}^{L-1} \lambda_{m,b}(t+lq) e \left( g(l) + \sum_{j=i+1}^{k} ((t+lq)^j-t^j) \alpha_j \right) \right|\\
  &=& \left| \sum_{l=0}^{L-1} \lambda_{m,b}(t+lq) e \left( g(l) + \sum_{j=i+1}^{k} ((t+lq)^j-t^j) (\alpha_j-a_j/q) \right) \right|
\end{eqnarray*}
where we set
\[
g(l)=\sum_{j=1}^{i} \alpha_j (t+lq)^{j}.
\]
Note that 
\begin{eqnarray*}
\left|\sum_{j=i+1}^{k} ((t+lq)^j-t^j) (\alpha_j-a_j/q) \right| &\ll& \sum_{j=i+1}^k Lq N^{j-1} (Mm)^{C_1} N^{\epsilon-j}q^{-1}
\ll
 L (Mm )^{C_1} N^{\epsilon-1} .
\end{eqnarray*} 
Therefore, 
\begin{eqnarray*}
\left| \sum_{n \in P} \lambda_{m,b}(n) e(f(n)) \right| &\ll& \left| \sum_{l=0}^{L-1} \lambda_{m,b}(t+lq) e(g(l)) \right| + L (Mm )^{C_1} N^{\epsilon-1} \sum_{l=0}^{L-1} \lambda_{m,b}(t+lq)\\
 &\ll& \left| \sum_{l=0}^{L-1} \lambda_{m,b}(t+lq) e(g(l)) \right| + L^2 (Mm )^{C_1} N^{\epsilon-1} \log N . 
\end{eqnarray*}
Clearly, $$L^2 (Mm )^{C_1} N^{\epsilon-1} \log N \ll L (Mm)^{C_1-C_2} N^{-\epsilon} \log N < \frac{L N^{-\epsilon}\log N}{M }.$$ Therefore,
\begin{equation}\label{eq:alpha1}
\left| \sum_{l=0}^{L-1} \lambda_{m,b}(t+lq) e(g(l)) \right| \gg \frac{L}{M} \gg \frac{N}{M'} ,
\end{equation}
where $M'=(Mm)^{C_2+1}N^{2\epsilon}$.

Let us analyze the LHS of (\ref{eq:alpha1}), which is
\[
 \left| \sum_{l=0}^{L-1} \lambda_{m,b}(t+lq) e \left( \sum_{j=1}^{i} \alpha_j (t+lq)^{j} \right) \right|.
\]
Let $r$ be the remainder of $mt+b$ upon dividing by $mq$. Then we can write, for each $l$,
\[
m(t+lq)+b=mqn+r
\]
for some integer $n$ given by 
\[
t+lq=qn+\frac{r-b}{m}.
\]
Furthermore, as $l$ ranges from $0$ to $L-1$, $n$ ranges on an interval $[N'-L+1, N']$, 
for some $L-1 \leq N' \leq N$. Thus we can rewrite the LHS of (\ref{eq:alpha1}) as
\[
\left| \sum_{n=N'-L+1}^{N'} \lambda_{mq,r}(n) e(h(n)) \right|,
\]
where 
\[
h(n)= \sum_{j=1}^{i} \alpha_j  \left( qn+\frac{r-b}{m} \right)^{j}.
\]
Note that $h(n)$ is a polynomial of degree $i$ with leading coefficient $\alpha_i q^i$. We now invoke Corollary \ref{cor:weyl2}. We can find a constant $C_3>0$ and a $1 \leq q' \ll (M' mq )^{C_3}N'^\epsilon \leq (M'mq)^{C_3} N^\epsilon$ such that $\|q'q^i \alpha_i \| \ll  (M' m q)^{C_3} N'^{\epsilon-i} \ll (M' m q)^{C_3} L^{\epsilon-i} $.
By setting $q_0=q'q^i$ and recalling the definitions of $L$ and $M'$, we see that for some sufficiently large constant $C_4>0$, we have $1 \leq q_0 \ll ( Mm N^\epsilon)^{C_4}$ and $\|q_0 \alpha_j \| \ll ( Mm N^\epsilon)^{C_4} N^{-j}$
for $j=i, \ldots, k$.
\end{proof}

\section{The one-dimensional case} \label{sec:1dim}

In this section, we prove the one-dimensional case of Theorem \ref{th:mainresult}.

\begin{theorem}\label{th:1dim}
Suppose the polynomials $h_1, \ldots, h_k$ of distinct degree are such that any linear combination of them with integer coefficients is intersective of the second kind. 
Then there is an exponent $\theta>0$ (depending at most on $h_1, \ldots, h_k$) such that the following holds. If  $\alpha_1, \ldots, \alpha_{k}$ are arbitrary real numbers, then there exists 
a prime $1\leq p \leq N$ such that
\[
\| \alpha_1 h_1(p)+\cdots+\alpha_{k} h_{k}(p) \| \ll N^{-\theta},
\]
where the implied constant does not depend on $\alpha_1, \ldots, \alpha_k, N$.
\end{theorem}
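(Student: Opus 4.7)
I argue by contradiction: assume $\|v(p)\| \geq N^{-\theta}$ for every prime $p \leq N$, where $v(x) := \sum_{i=1}^k \alpha_i h_i(x)$. With $M := N^\theta$, applying Lemma \ref{lem:montgomery} to weights $c_n = \lambda_{1,0}(n)$ and points $x_n = v(n)$ produces an integer $1 \leq t \leq M$ with
\[
\left|\sum_{n=1}^N \lambda_{1,0}(n)\, e(t v(n))\right| \gg \frac{N}{M}.
\]
Theorem \ref{th:simul} (with $m=1$, $b=0$) then applies to $f(x) := t v(x)$, which has degree $d_k := \deg h_k$, and yields an integer $1 \leq q_0 \ll M^C N^\epsilon$ such that the coefficient of $x^j$ in $f$ is within $O(M^C N^{\epsilon - j})$ of an integer divided by $q_0$, for each $j = 1, \ldots, d_k$.

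\textbf{From coefficient approximations to approximations of each $\alpha_i$.} Because the degrees $d_1 < d_2 < \cdots < d_k$ are strictly increasing, the coefficient of $x^{d_i}$ in $f$ is an upper-triangular integer combination of $t\alpha_i, t\alpha_{i+1}, \ldots, t\alpha_k$ with diagonal entry $\lead(h_i)$. Inverting this triangular system via the adjugate matrix (or first reducing to a nice system using Lemma \ref{lem:change} with $d = 1$, $r = 0$) produces an integer $1 \leq Q \ll N^{(C+1)\theta + \epsilon}$ and integers $c_1, \ldots, c_k$ such that
\[
\left|\alpha_i - \frac{c_i}{Q}\right| \ll \frac{N^{C\theta + \epsilon - d_i}}{Q}\qquad (1 \leq i \leq k).
\]

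\textbf{Finishing via intersectivity and Linnik.} Applying the hypothesis of Theorem \ref{th:1dim} to the single $\Z$-linear combination $V(x) := \sum_{i=1}^k c_i h_i(x)$ --- which is intersective of the second kind by assumption --- produces $r \in \Z$ with $\gcd(r, Q) = 1$ and $V(r) \equiv 0 \pmod Q$. Linnik's theorem then supplies a prime $p \equiv r \pmod Q$ with $p \ll Q^L$, and $p \leq N$ provided $\theta$ is chosen so that $(C+1)\theta L < 1$. For this prime $V(p) \equiv V(r) \equiv 0 \pmod Q$, so the rational part $\sum_i (c_i/Q) h_i(p)$ is an integer, and hence
\[
\|v(p)\| \leq \sum_{i=1}^k \left|\alpha_i - \frac{c_i}{Q}\right| p^{d_i} \ll \frac{1}{Q}\sum_{i=1}^k N^{C\theta + \epsilon - d_i}\, Q^{L d_i}.
\]
Choosing $\theta$ small enough in terms of $k, d_1, C$ and $L$ renders this $< N^{-\theta}$, contradicting the standing assumption. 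The chief technical subtlety lies in this compound constraint on $\theta$: it must be simultaneously small enough for Linnik's prime to lie in $[1, N]$ \emph{and} small enough for the final sum to beat $N^{-\theta}$ once $p$ has been enlarged to $\sim Q^L$; beyond this exponent book-keeping, the proof is a direct assembly of the ingredients in Sections \ref{sec:lemmas}--\ref{sec:simultaneous}.
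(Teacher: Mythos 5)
Your proposal is correct and follows essentially the same route as the paper's proof: contradiction, Lemma \ref{lem:montgomery} to produce a large exponential sum, Theorem \ref{th:simul} to get simultaneous rational approximations with a common denominator $Q$, the intersectivity hypothesis applied to $\sum_i c_i h_i$ to find a residue coprime to $Q$, and Linnik's theorem to land on a prime $p\ll Q^L\leq N$. The only (cosmetic) difference is that the paper first passes to a nice system via Lemma \ref{lem:change} so that the output of Theorem \ref{th:simul} directly approximates each $\beta_j\lead(g_j)$, whereas you invert the triangular coefficient matrix afterwards — a step you correctly note is interchangeable with Lemma \ref{lem:change}.
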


\begin{proof}
By Theorem \ref{th:main3}, we may assume that $k \geq 2$.
Suppose for a contradiction that
for every prime $1 \leq p \leq N$, we have 
\[
\| \alpha_1 h_1(p)+\cdots+\alpha_{k} h_{k}(p) \| \geq M^{-1} ,
\]
where $M= \lfloor N^{\theta} \rfloor$ and $\theta$ is a sufficiently small exponent to be chosen later.
Let the matrix $T$ and polynomials $g_1, \ldots, g_k$ be obtained by applying Lemma \ref{lem:change} for the polynomials $h_1, \ldots, h_k$ with $d=1$ and $r=0$. 
Set
\[
 (\beta_1 \cdots \beta_k) = (\alpha_1 \cdots \alpha_k) T^{-1}.
\]
Then for all prime $p$, $1 \leq p \leq N$, we have
\begin{equation*} \label{eq:1}
\| \beta_1 g_1(p)+\cdots+\beta_{k} g_{k}(p) \| = \| \alpha_1 h_1(p)+\cdots+\alpha_{k} h_{k}(p) \| \geq M^{-1}.
\end{equation*}
By the Prime Number Theorem and Lemma \ref{lem:montgomery}, there exists $1 \leq t \leq M$ with
$$ \left| \sum_{\substack{p=1 \\ p \text{ prime}}}^N \log p \cdot e\left( t \beta_1 g_1(p)+\cdots+t\beta_{k} g_{k}(p) \right) \right| \gg \frac{N}{M}.$$ 
Recall that $\deg(g_i) = \deg(h_i) = d_i$ ($1 \leq i \leq k$).
Applying Theorem \ref{th:simul} to the polynomial
\[
 t \beta_1 g_1(n)+\cdots+t\beta_{k} g_{k}(n) 
\]
with $m=1$ and $b=0$, we see that (assuming $M<N^c$, where $c$ is the constant in Theorem \ref{th:simul}) there is an integer $1\leq q \leq M^{C}N^\epsilon$ such that
$$\| qt \beta_j \lead(g_j) \| \leq M^C N^{\epsilon-d_j}$$
for every $j=1, \ldots, N$, where $C$ is a constant depending only on $k$.

Let $R=qt \prod_{i=1}^{k} \lead (g_i)$, then $R \ll M^{C+1} N^\epsilon$. We have $\|R \beta_i\| \ll M^C N^{\epsilon-d_i}$ for all $i$. 
It follows that we can find integers $a_i$ such that $|\beta_i -\frac{a_i}{R}| \ll M^C N^{\epsilon-d_i} R^{-1} $ for all $i$. Let us now choose $1 \leq n \leq R$ such that $(n,R)=1$ and
\[
a_1g_1(n) + \cdots + a_k g_k(n) \equiv 0 \pmod{R} ,
\]
which is possible because each $g_i$ is a linear combination of the $h_i$ and consequently $a_1g_1 + \cdots + a_k g_k$ is intersective of the second kind. 

By Linnik's theorem, there is a prime $p$ such that $p \equiv n \pmod{R}$ and $p \ll R^{L}$, where $L$ is Linnik's constant. On the one hand, we have
\[
 p \ll R^{L} \ll \left( M^{C+1} N^\epsilon \right)^L \leq N
\]
if $\theta$ is sufficiently small and $N$ is sufficiently large. On the other hand,
\begin{eqnarray*}
 \left\| \sum_{i=1}^{k} \beta_i g_i(p) \right\| &\leq& \left| \sum_{i=1}^{k} \beta_i g_i(p) - \frac{1}{R} \sum_{i=1}^{k} a_i g_i(p) \right| \leq \sum_{i=1}^{k} \left| \left(\beta_i - \frac{a_i}{R} \right)g_i(p)\right|  \\
 & \ll & \sum_{i=1}^{k} M^C N^{\epsilon-d_i} R^{-1} R^{L d_i}  \ll M^C N^{\epsilon-1/L}.
\end{eqnarray*}
If $\theta$ is sufficiently small and $N$ is sufficiently large, then this is smaller than $M^{-1}$, which is a contradiction.
\end{proof}

For the remainder of the paper, we will study jointly intersective polynomials $h_1, \ldots, h_k$, and we will prove Theorem \ref{th:mainresult} by induction under this hypothesis. 
For technical reasons, we work with these polynomials along arithmetic progressions $dx+r_d$, where the sequence $(r_d)$ is given by (\ref{eq:d}), 
and the modulus $d$ may be as large as a small power of $N$. The base case is the following, whose proof we will omit since it is very similar to the proof of 
Theorem \ref{th:1dim} (see also \cite[Theorem 7]{lespen}), the difference being that we will work with the weights $\lambda_{d, r_d}$ in (\ref{eq:lambda}) 
instead of $\log p$. Also, one needs the lower bound in Lemma \ref{lem:linnik} when one applies Lemma \ref{lem:montgomery}.

\begin{theorem}\label{th:base} Let $h_1, \ldots, h_k$ be jointly intersective polynomials of the second kind of distinct degree.  
There are exponents $\theta, \sigma>0$ $($depending on the $h_i)$ such that the following holds. If $d$ is a modulus smaller than $N^{\sigma}$ and $\alpha_1, \ldots, \alpha_{k}$ are arbitrary real numbers, then there exists 
a prime $p$, $1\leq p \leq N$ such that $p \equiv r_d \pmod{d}$ and
\[
\| \alpha_1 h_1(n)+\cdots+\alpha_{k} h_{k}(n) \| \ll N^{-\theta},
\]
where the implied constant does not depend on $\alpha_1, \ldots, \alpha_k, N, d$.
\end{theorem}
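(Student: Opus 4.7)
The plan is to imitate the proof of Theorem \ref{th:1dim} almost line-for-line, with the weight $\log p$ (summed over all primes $\leq N$) replaced by $\lambda_{d,b}(n)$ for $0 \leq b < d$, $b \equiv r_d \pmod{d}$, so that the congruence condition $p \equiv r_d \pmod{d}$ is baked into the weight. Arguing by contradiction, suppose that every prime $p \leq N$ with $p \equiv r_d \pmod{d}$ satisfies $\|\sum_i \alpha_i h_i(p)\| \geq M^{-1}$, where $M=\lfloor N^{\theta}\rfloor$. Applying Lemma \ref{lem:change} with the reparametrization $x \mapsto dx + r_d$ produces an integer matrix $T$ and a nice system $g_1,\ldots,g_k$ such that $\sum_i \alpha_i h_i(dn + r_d) = \sum_i \beta_i g_i(n)$, where $\bm{\beta}^{T} = \bm{\alpha}^{T}T^{-1}$. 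For any $n$ such that $p=dn+b$ is prime and $\leq N$, the hypothesis becomes $\|\sum_i \beta_i g_i(n)\| \geq M^{-1}$.

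Next I would apply Lemma \ref{lem:montgomery} to the weights $\lambda_{d,b}$. Here Lemma \ref{lem:linnik} is essential: it provides the lower bound $\sum_n \lambda_{d,b}(n) \gg (N/d)d^{-2}$, which replaces the straightforward use of the prime number theorem in Theorem \ref{th:1dim} and is exactly why $d$ is allowed to be as large as a small power of $N$. This yields an integer $1 \leq t \leq M$ with
\[
\left|\sum_{n \leq N/d}\lambda_{d,b}(n)\, e\!\bigl(t\textstyle\sum_i \beta_i g_i(n)\bigr)\right| \gg \frac{N d^{-3}}{M}.
\]
Invoking Theorem \ref{th:simul} (with the role of $M$ played by $Md^{3}$ and the role of $m$ by $d$) and using that the $g_i$ form a nice system, I obtain an integer $1 \leq q \ll (Md^{4})^{C}N^{\epsilon}$ such that $\|qt \beta_i \lead(g_i)\| \ll (Md^{4})^{C}N^{\epsilon - d_i}$ for every $i$, where $d_i = \deg g_i = \deg h_i$. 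Setting $R = qt\prod_i \lead(g_i)$, there exist integers $a_i$ with $|\beta_i - a_i/R| \ll (Md^{4})^{C}N^{\epsilon-d_i}R^{-1}$.

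To close the argument I would use joint intersectivity of the second kind to locate a suitable prime. By the compatibility property \eqref{eq:rd} there is an integer $r_{dR} \in (-dR,0]$, coprime to $dR$, satisfying $r_{dR} \equiv r_d \pmod{d}$ and $h_j(r_{dR}) \equiv 0 \pmod{dR}$ for every $j$. Setting $F = \sum_j (\sum_i a_i T_{ij}) h_j$, one has $F(r_{dR}) \equiv 0 \pmod{R}$, equivalently $\sum_i a_i g_i((r_{dR}-r_d)/d) \equiv 0 \pmod{R}$. Linnik's theorem then produces a prime $p \equiv r_{dR} \pmod{dR}$ with $p \ll (dR)^{L}$, and for such $p$ both the congruence $p \equiv r_d \pmod{d}$ and the divisibility $\sum_i a_i g_i((p-r_d)/d) \equiv 0 \pmod{R}$ hold. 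Estimating the approximation error exactly as in Theorem \ref{th:1dim},
\[
\Bigl\|\sum_i \beta_i g_i\bigl((p-r_d)/d\bigr)\Bigr\| \ll \sum_i \bigl(Md^{4}\bigr)^{C} N^{\epsilon-d_i} R^{-1} \bigl|g_i\bigl((p-r_d)/d\bigr)\bigr| \ll (Md^{4})^{C'} N^{\epsilon - 1/L},
\]
which is smaller than $M^{-1}$ provided $\theta$ and $\sigma$ are chosen sufficiently small, contradicting our hypothesis. The main obstacle is bookkeeping: one must verify that every occurrence of $d$ can be absorbed into the same power-of-$N$ savings that handled $M$ in Theorem \ref{th:1dim}, so that a single joint choice of $\theta, \sigma > 0$ works in both the requirement $p \ll (dR)^L \leq N$ and the final error estimate. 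Since $d \leq N^{\sigma}$ and $M = N^{\theta}$, and each application either multiplies $M$ by a bounded power of $d$ or introduces an extra $N^{\epsilon}$, taking $\sigma, \theta$ small in terms of $C$, $C'$, $L$, $k$ suffices.
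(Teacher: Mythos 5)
Your proposal is correct and follows exactly the route the paper intends: the authors omit this proof precisely because it is the argument of Theorem \ref{th:1dim} rerun with the weights $\lambda_{d,r_d}$, with the lower bound of Lemma \ref{lem:linnik} supplying the needed mass when Lemma \ref{lem:montgomery} is applied, and with the compatibility $r_{dR}\equiv r_d \pmod d$ used to enforce both the congruence mod $d$ and the divisibility mod $R$ before invoking Linnik's theorem. Your bookkeeping of the extra powers of $d$ (all absorbed by taking $\sigma$ small relative to $\theta$, $C$, and $L$) is the only genuine addition, and it is handled correctly.
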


\section{The general case} \label{sec:general}
In this section, we prove Theorem \ref{th:mainresult}. Following Schmidt \cite{schmidt}, we reformulate the problem in the language of lattices and prove a more general statement that 
allows us to perform induction. Precisely, we will prove the following theorem.

\begin{theorem}\label{lattice} Let $h_1, \ldots, h_k$ be polynomials satisfying the conditions in Theorem \ref{th:mainresult}.  For every natural number $l$, there are exponents $\theta_l, \sigma_l$ such that the following holds. If $\Lambda$ is a lattice with determinant $\det(\Lambda) \leq N^{\theta_l}$, $d$ is a modulus with $d \leq N^{\sigma_l}$, and $A$ is any real $l \times k$ matrix, 
then there exists a prime $p$, $1 \leq p \leq N$ such that 
\[ 
 A \begin{pmatrix} h_1(p) \\ \vdots \\ h_{k}(p) \end{pmatrix} \in \Lambda + B_l,
\]
where $B_l$ is the unit ball in $\R^{l}$. Furthermore, $p \equiv r_d \pmod{d}$.
\end{theorem}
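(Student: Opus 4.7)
My plan is to proceed by induction on $l$. The base case $l = 1$ is Theorem \ref{th:base}. For the inductive step I assume the statement at level $l-1$ with exponents $\theta_{l-1}, \sigma_{l-1}$, and deduce it at level $l$ for sufficiently small $\theta_l, \sigma_l$ (depending also on $k$ and the degrees of the $h_i$). Argue by contradiction: suppose $A\vec{h}(p) \notin \Lambda + B_l$ for every prime $p \leq N$ with $p \equiv r_d \pmod d$. Parametrizing such primes as $p = dm + r_d$, setting $\vec{x}_m = A\vec{h}(dm+r_d)$ and $c_m = \lambda_{d,r_d}(m)$, Lemma \ref{lem:alternative} produces a primitive point $\vec{p} \in \Pi$ in the dual lattice of $\Lambda$ with $|\vec{p}| \ll_l 1$ and an integer $1 \leq t \ll_l 1/|\vec{p}|$ such that
\[
\left|\sum_m \lambda_{d,r_d}(m)\, e\bigl(t\vec{p}\cdot A\vec{h}(dm+r_d)\bigr)\right| \;\gg_l\; \frac{N}{d^{2}\det(\Lambda)},
\]
using Lemma \ref{lem:linnik} to lower-bound the total mass of $\lambda_{d,r_d}$.

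The resulting scalar exponential sum involves the one-variable polynomial $m \mapsto t\vec{p}^{\,T}A\vec{h}(dm+r_d)$. Applying Lemma \ref{lem:change} (with $d$ and $r = r_d$), I replace $h_i(dm+r_d)$ by a nice system $g_1, \ldots, g_k$ via a lower-triangular integer matrix $T$ and rewrite the polynomial as $\vec{\beta}^{\,T}\vec{g}(m)$ with $\vec{\beta}^{\,T} = t\vec{p}^{\,T}AT^{-1}$. Theorem \ref{th:simul} then yields a single integer $q$ of size at most $(d\det(\Lambda))^{O(1)} N^{\epsilon}$ with $\|q\beta_i\,\lead(g_i)\| \ll (d\det(\Lambda))^{O(1)} N^{\epsilon - \deg g_i}$ for every $i$. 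Back-substituting through $T$ and clearing denominators, I obtain a single modulus $R$, still of polynomial size $N^{O(\theta_l+\sigma_l+\epsilon)}$, and an integer vector $\vec{a}$ with
\[
R\vec{p}^{\,T}A\vec{h}(n) \;=\; \vec{a}^{\,T}\vec{h}(n) + o(1)
\]
uniformly for $1 \leq n \leq N$.

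For the inductive reduction, observe that whenever $n \equiv r_{dR} \pmod{dR}$ one has $h_i(n) \equiv h_i(r_{dR}) \equiv 0 \pmod{dR}$ for every $i$, hence $\vec{a}^{\,T}\vec{h}(n) \equiv 0 \pmod R$, and consequently $\vec{p}\cdot A\vec{h}(n) \in \Z + o(1)$. Primitivity of $\vec{p}$ in $\Pi$ lets me pick $\vec{w} \in \Lambda$ with $\vec{p}\cdot\vec{w} = 1$, giving the orthogonal decomposition $\Lambda = \Z\vec{w} \oplus \Lambda_0$ where $\Lambda_0 := \Lambda\cap\vec{p}^{\perp}$ is a rank $(l-1)$ lattice in $\vec{p}^{\perp}\cong \R^{l-1}$ with determinant $\det(\Lambda_0) = |\vec{p}|\det(\Lambda) \ll \det(\Lambda)$. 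Define $A' := (I - \vec{w}\vec{p}^{\,T})A$, a linear map $\R^k \to \vec{p}^{\perp}$, and apply the inductive hypothesis with data $(\Lambda_0, A', dR)$. Provided $\theta_l, \sigma_l$ have been chosen small enough that $dR \leq N^{\sigma_{l-1}}$ and $\det(\Lambda_0) \leq N^{\theta_{l-1}}$, the inductive hypothesis produces a prime $p \leq N$ with $p \equiv r_{dR} \pmod{dR}$ (in particular $p \equiv r_d \pmod d$ by the consistency $r_{dR} \equiv r_d \pmod d$) such that $A'\vec{h}(p) \in \Lambda_0 + B_{l-1}$. Combining this with the near-integrality $\vec{p}\cdot A\vec{h}(p) \in \Z + o(1)$ and the decomposition $A\vec{h}(p) = A'\vec{h}(p) + (\vec{p}\cdot A\vec{h}(p))\vec{w}$, I conclude $A\vec{h}(p) \in \Lambda + B_l$, contradicting the hypothesis.

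The main obstacle is the bookkeeping of all the exponent losses through the induction: each application of Theorem \ref{th:simul} and Lemma \ref{lem:alternative} introduces constants depending on $k, l$, and the degrees of the $h_i$, and the exponents $\theta_l, \sigma_l$ must shrink compared to $\theta_{l-1}, \sigma_{l-1}$ by a controlled factor in order to keep both $dR \leq N^{\sigma_{l-1}}$ and $\det(\Lambda_0) \leq N^{\theta_{l-1}}$. A secondary technicality is absorbing the factors $|\vec{w}|$ and the $o(1)$ error in the final combining step; this is handled by running the induction with a ball of radius growing slowly in $l$ and then rescaling $\Lambda$ at the end so that the output ball is the unit ball.
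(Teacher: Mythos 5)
Your overall architecture is the same as the paper's: induction on $l$ with Theorem \ref{th:base} as the base case, Lemma \ref{lem:alternative} to extract a primitive dual direction $\vec{p}$ and a large scalar exponential sum, Lemma \ref{lem:change} plus Theorem \ref{th:simul} to rationally approximate the coefficients, and then a descent to the rank-$(l-1)$ lattice $\Lambda\cap\vec{p}^{\perp}$ with an enlarged modulus $dR$. Your bookkeeping of $\Lambda=\Z\vec w\oplus\Lambda_0$ via a vector $\vec w$ with $\vec p\cdot\vec w=1$ is an equivalent reformulation of the paper's use of the nearest lattice points $\vec v_i$ and the projections $\vec w_i$.

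There is, however, one genuine gap: the assertion that $R\vec{p}^{\,T}A\vec{h}(n)=\vec{a}^{\,T}\vec{h}(n)+o(1)$ \emph{uniformly for $1\le n\le N$} is false, and the induction step as you have set it up fails because of it. Theorem \ref{th:simul} gives $\lvert \beta_i-a_i/(q\,\lead(g_i))\rvert\ll K\,N^{\epsilon-d_i}/(q\lvert\lead(g_i)\rvert)$ with $K=(\det(\Lambda)\,d^{3})^{C}$, so the error contributed to $\sum_i\beta_i g_i(m)$ by the $i$-th term is of order $K\,N^{\epsilon-d_i}\,(dm)^{d_i}$ (up to harmless factors). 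For $m$ comparable to $N$ this is $\asymp K\,d^{O(1)}N^{\epsilon}$, a \emph{positive} power of $N$, not $o(1)$; the saving $N^{-d_i}$ from the approximation is exactly cancelled by the growth $m^{d_i}$ of the polynomial. Consequently, for the prime $p\le N$ produced by your inductive hypothesis, the quantity $\vec p\cdot A\vec h(p)$ need not be close to an integer, and the final step $A\vec h(p)\in\Lambda+B_l$ does not follow. The paper circumvents precisely this by proving its Claim only for $m\le\sqrt N$ and invoking the induction hypothesis at the shorter scale $N^{1/2}$ (with modulus $R\le N^{\sigma_{l-1}/2}$ and $\det(\Lambda')\ll N^{\theta_{l-1}/2}$); then $(dm)^{d_i}N^{-d_i}\ll N^{-d_i(1/2-\sigma_l)}$ and the error really is a negative power of $N$. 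Your argument is repaired by the same modification — apply the level-$(l-1)$ statement on $[1,\sqrt N]$ rather than $[1,N]$ — but as written the step fails. (Your closing remarks about absorbing $\lvert\vec w\rvert$ and shrinking the ball radius are legitimate technicalities and are handled in the paper by working with $\tfrac12 B_l$ and the bound $\lvert\vec p\rvert\gg\det(\Lambda)^{-1}$; they are not the issue.)
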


It is easy to see that Theorem \ref{lattice} implies Theorem \ref{th:mainresult} (where $\theta$ can be taken to be $\theta_l/l$) by setting $\Lambda=N^{\theta_l/l} \Z^{l}$
and replacing the matrix $A$ in Theorem \ref{th:mainresult} with $N^{\theta_l/l} A$.

\begin{proof}[Proof of Theorem \ref{lattice}]
We prove this theorem by induction on $l$.  The case when $l=1$ follows from Theorem \ref{th:base}. Suppose that $l \geq 2$ and that we have determined appropriate values for $\theta_{l-1}$ and $\sigma_{l-1}$. Let $\theta_{l}$ and $\sigma_{l}$ be two exponents, which will be selected later, and suppose that
\begin{equation*} 
 A \begin{pmatrix} h_1(dn+r_d) \\ \vdots \\ h_{k}(dn+r_d) \end{pmatrix} \notin \Lambda + B_l
\end{equation*}
for all primes of the form $dn+r_d$ with $1 \leq n \leq M=\lfloor N^{1-\sigma_l} \rfloor -1$. Let $T$ be the matrix and $ g_1, \ldots, g_k$ be the  polynomials resulting from an application of Lemma \ref{lem:change} for the polynomials $h_1,\ldots, h_k$ with respect to $d$ and $r=r_d$.  Set $B=AT^{-1}$.  Then
\begin{equation} \label{b}
B \begin{pmatrix} g_1(n) \\ \vdots \\ g_{k}(n) \end{pmatrix} \notin \Lambda + B_l
\end{equation}
for all $n=1, \ldots, M$ with $dn+r_d$ prime. Let $\epsilon>0$ be any number, and assume that $N$ is sufficiently large in terms of $\epsilon$.  
Let $\vb_1, \ldots, \vb_k$ be the columns of $B$. Applying Lemma \ref{lem:alternative} to the vectors $\vx_n=g_1(n)\vb_1+\cdots+g_k(n)\vb_k$, we find a primitive point $\vp$
in the dual lattice $\Pi$ of $\Lambda$ with $|\vp| \ll 1$ and an integer $1 \leq t \ll 1$ such that 
\[
\left| \sum_{n=1}^{M} \lambda_{d,r_d}(n) e \left(t g_1(n) \vb_1 \cdot \vp + \cdots + t g_k(n) \vb_k \cdot \vp \right) \right| \gg \det(\Lambda)^{-1} \sum_{n=1}^{M} \lambda_{d,r_d}(n) \gg \det(\Lambda)^{-1} M d^{-2} . 
\]
Here, the last bound follows from Lemma \ref{lem:linnik}.
Applying Theorem \ref{th:simul}, 
there exists a constant $C>0$ such that we can find
$1\leq q \ll \left(\det(\Lambda) d^3 \right)^C M^\epsilon$ with
\[
\|q t \lead(g_i) \vb_i \cdot \vp  \| \ll \left(\det(\Lambda) d^3 \right)^C M^{\epsilon-d_i}
\]
for all $i=1, \ldots, k$, since $(g_1, \ldots, g_k)$ is a nice system with $d_i$ being the degree of $g_i$.
For any $\delta > 0$, by taking $\theta_l$ and $\sigma_l$ small enough depending on $\delta$, one can find $1\leq q \ll M^\delta$ with
$
\|q t \lead(g_i) \vb_i \cdot \vp  \| \ll M^{\delta-d_i}$
for all $i=1, \ldots, k$.

There are integers $n_i$ such that $\|q t \lead(g_i)\vec{b}_i \cdot \vp \|=|q t \lead(g_i) \vec{b}_i \cdot \vp-n_i|$. 
Since $\vp$ is a primitive point in $\Pi$, there are vectors $\vv_i \in \Lambda$ such that $n_i=\vp \cdot \vv_i$. 
Thus we have $|(q t \lead (g_i) \vb_i - \vv_i)\cdot \vp| \ll M^{\delta-d_i}$. 
Roughly speaking, this means that the vectors $\vu_i=q t \lead (g_i) \vb_i - \vv_i$ almost lie in the orthogonal complement of $\vp$, 
and we will exploit this fact to move to a space of smaller dimension.  
Before proceeding, let us state our goal. Let $R=c q t d^{d_k}\prod_{i=1}^{k}\lead(h_i)$, where $c$ is the constant in Property (\ref{p3}) of Lemma \ref{lem:change}.
Then $R$ is divisible by $q t \lead (g_i)$ for every $i$.

\textbf{Claim:} There exists $1\leq m \leq \sqrt{N} \leq M$ such each $g_i(m)$ is divisible by $R$, $dm+r_d$ is prime, and
\begin{equation} \label{goal}
\frac{g_1(m)}{q t \lead (g_1)} \vu_1+\cdots+\frac{g_k(m)}{q t \lead(g_k)} \vu_k \in \Lambda + B_l.
\end{equation}
Since  
\[
\frac{g_1(m)}{q t \lead (g_1)} \vu_1+\cdots+\frac{g_k(m)}{q t \lead (g_k)} \vu_k = g_1(m) \vb_1+\cdots+g_k(m) \vb_k-\frac{g_1(m)}{q t \lead (g_1)}\vv_1-\cdots-\frac{g_k(m)}{q t \lead (g_k)}\vv_k
\] 
and the $\vv_i$ are in $\Lambda$, this immediately implies that $g_1(m) \vb_1+\cdots+g_k(m) \vb_k \in \Lambda + B_l$, which contradicts (\ref{b}). (The claim makes it clear why we need to include the extra divisibility requirement in 
our induction hypothesis.)

Let $\Lambda'$ be the intersection of $\Lambda$ and the $(l-1)$-dimensional space $V=\vp^{\perp}$. Since $\vp$ is a primitive point in the dual lattice of $\Pi$, $\Lambda'$ is a sublattice of $\Lambda$ of dimension $l-1$. In order to achieve (\ref{goal}), we want to have
\begin{equation} \label{goal1}
\frac{g_1(m)}{qt \lead (g_1)}\vw_1+\cdots+\frac{g_k(m)}{q t \lead (g_k)}\vw_k \in \Lambda' + \frac{1}{2}B_l ,
\end{equation}
where $\vw_i$ is the orthogonal projection of $\vu_i$ onto $V$, and
\begin{equation} \label{goal2}
\frac{g_1(m)}{q t \lead (g_1)}(\vu_1-\vw_1)+\cdots+\frac{g_k(m)}{q t \lead (g_k)}(\vu_k-\vw_k) \in \frac{1}{2}B_l.
\end{equation}

Since $|\vp| \gg \det(\Lambda)^{-1}$ (see \cite[p. 28]{schmidt}), 
we have $$|\vu_i-\vw_i| = \frac{|\vu_i \cdot \vp|}{|\vp|} \ll \frac{M^{\delta-d_i}}{|\vp|} \ll \det(\Lambda) M^{\delta-d_i} \leq N^{\theta_l} M^{\delta-d_i}.$$
It is easy to see from Lemma \ref{lem:change} that we also have the bound $g_i(m)\ll (dm)^{d_i}$. 
Hence for every $1 \leq m \leq \sqrt{N}$,
\begin{eqnarray*}
\left| \frac{g_1(m)}{q t \lead (g_1)}(\vu_1-\vw_1)+\cdots+\frac{g_k(m)}{q t \lead (g_k)}(\vu_k-\vw_k) \right| &\leq& \sum_{i=1}^{k} N^{\theta_l} M^{\delta-d_i} (dm)^{d_i} \\
&\ll&  \sum_{i=1}^{k} N^{\theta_l} M^{\delta-d_i}  N^{d_i(1/2+\sigma_l)},
\end{eqnarray*}
which can be made smaller than 1/2 if $\theta_l$, $\sigma_l$, and $\delta$ are sufficiently small.
Thus (\ref{goal2}) holds for every $1 \leq m \leq N^{1/2}$.

Let us now turn our attention to (\ref{goal1}) and the conditions that each $g_i(m)$ is divisible by $R$  and that $dm+r_d$ is prime. These are satisfied if we can find a prime $1 \leq p \leq \sqrt{N}$ such that $p \equiv r_{R} \pmod{R}$ and
\begin{equation} \label{goal3}
h_1(p)\vs_1+\cdots+h_k(p)\vs_k \in \Lambda' + \frac{1}{2}B_l ,
\end{equation}
where 
\[
(\vs_1 \vs_2 \cdots \vs_k)= \left( \frac{\vw_1}{q t \lead (g_1)}  \quad  \frac{\vw_2}{qt \lead (g_2)}  \quad \cdots  \quad \frac{\vw_k}{q t \lead (g_k)} \right) T.
\]
Once such a prime $p$ is found, upon setting $m=\frac{p-r_d}{d}$, (\ref{goal1}) holds. 
Since $p \equiv r_R \, (\text{mod } R)$ and each $g_i(m)$ can be written as a linear combination of the $h_j(p)$, it follows that each $g_i(m)$ is divisible by $R$. Also,  note that since $R$ is divisible by $d$, we have $p \equiv r_d \, (\text{mod } d)$.

 By our induction hypothesis, we can find a prime $1 \leq p \leq N^{1/2}$ satisfying $p \equiv r_R \, (\text{mod } R)$ and (\ref{goal3})
 provided that $R \leq N^{\sigma_{l-1}/2}$ and $\det(\Lambda')\ll N^{\theta_{l-1}/2}$. 
By definition,
\[
R \ll qtd^{d_k} \ll  M^{\delta}  N^{d_k \sigma_l}, 
\]
and $\det(\Lambda')=|\vp|\det(\Lambda) \ll N^{\theta_l}$.
Hence, by choosing  $\delta$, $\theta_l$, and $\sigma_l$ sufficiently small in terms of $\sigma_{l-1}$ and $\theta_{l-1}$, our claim is achieved, and the theorem holds.
\end{proof}

\end{document}